\newtheorem{theorem}{Theorem}[section]
\newtheorem{lemma}[theorem]{Lemma}
\newtheorem{rmk}[theorem]{Remark}
\newtheorem{proposition}[theorem]{Proposition}
\newtheorem*{theorem*}{Theorem}
\newtheorem*{conjecture*}{Conjecture}
\newtheorem{corollary}[theorem]{Corollary}
\newtheorem*{claim*}{Claim}
\theoremstyle{definition}
\newtheorem*{goal*}{Goal}
\theoremstyle{remark}
\numberwithin{equation}{section}
\def\supp{{\rm supp\,}}
\newcommand{\B}{{\mathbb B}}
\newcommand{\Be}{\begin{equation}}
\newcommand{\Ee}{\end{equation}}
\newcommand{\Bea}{\begin{eqnarray}}
\newcommand{\Eea}{\end{eqnarray}}
\newcommand{\Bel}{\begin{align}}
\newcommand{\Eel}{\end{align}}
\newcommand{\Beas}{\begin{eqnarray*}}
\newcommand{\Eeas}{\end{eqnarray*}}
\newcommand{\Benu}{\begin{enumerate}}
\newcommand{\Eenu}{\end{enumerate}}
\newcommand{\Bi}{\begin{itemize}}
\newcommand{\Ei}{\end{itemize}}
\newcommand{\dist}{\operatorname{dist}}
\begin{document}
\title[{Endpoint maximal estimates}]{Endpoint estimates for maximal operators associated to the wave equation}

\author{Chu-hee Cho}
\author{Sanghyuk Lee}
\address{Department of Mathematical Sciences and RIM, Seoul National University, Seoul 08826, Republic of Korea}
\email{akilus@snu.ac.kr}

\email{shklee@snu.ac.kr}

\author{Wenjuan Li}
\address{School of Mathematics and Statistics, Northwestern Polytechnical University, Xi’an 710129, China}
\email{liwj@nwpu.edu.cn}

\makeatletter
\@namedef{subjclassname@2020}{\textup{2020} Mathematics Subject Classification}
\makeatother
\subjclass[2020]{Primary 35L05,  Secondary 42B37}
\keywords{Wave operator, maximal estimate}

\begin{abstract} We consider  the $H^{s}$--$L^q$ maximal  estimates associated to the wave operator
\begin{equation*}
	e^{ it\sqrt{-\Delta}}f(x) = \frac{1}{(2\pi)^d}\int_{\mathbb{R}^d} e^{i(x \cdot \xi \, + t|\xi|)} \widehat{f}(\xi\,) d\xi.
\end{equation*}
Rogers--Villarroya proved  $H^{s}$--$L^q$ estimates for  the maximal operator $f\mapsto$ $\sup_{t} |e^{ it\sqrt{-\Delta}}f|$
up to the critical Sobolev exponents $s_c(q,d)$. However, the endpoint case estimates for the critical exponent $s=s_c(q,d)$ have remained open so far. 
We obtain the endpoint $H^{s_c(q,d)}$--$L^q$ bounds  on the  maximal operator $f\mapsto \sup_{t} |e^{ it\sqrt{-\Delta}}f|$. 
We also prove that  several different forms of the maximal estimates considered by Rogers--Villarroya are basically  equivalent to each other. 
\end{abstract}

\maketitle

\section{Introduction}
For $d\ge 2$, let us consider the wave equation 
	\[ \partial_{t}^2u(x,t)-\triangle u(x,t) =0, \quad (x,t) \in \mathbb{R}^{d}\times \mathbb{R}^+\]
with initial data $u(x,0)=f$ and $\partial_t u(x,0)=0$, whose formal solution 
 is given by 
\begin{align*}
	u(x,t)
	=\frac{1}{2}\Bigl(e^{ it\sqrt{-\Delta}}f(x)+e^{ -it\sqrt{-\Delta}}f(x) \Bigl).
\end{align*}
In this paper, we are concerned with several types of maximal estimates for the wave operator $f\mapsto e^{ it\sqrt{-\Delta}}f$.  Such estimates have been utilized  to study  
pointwise convergence of the solution $u(\cdot,t)$ to the initial data $f$  as $t\to 0$.  They also find applications in the study of the linear and nonlinear wave equations. Specifically, we focus on local and global (in time) maximal operators in  $L^q$ or $L^q_{loc}$ space, 
as investigated in  \cite{RV} (see the estimates \eqref{max}, \eqref{local-local}, \eqref{global-local}, and \eqref{global-global0} below).

\subsection{Local in time estimate.} We first consider the local in time maximal estimate  
\begin{equation}\label{max}
	\big\| \sup_{t \in (0,1)} \big|e^{it\sqrt{-\Delta}}f\big|\big\|_{L^{q}(\mathbb{R}^d)} \le C_{d,q,s} \|f\|_{{H}^s(\mathbb{R}^d)},
\end{equation}
which was studied by various authors. Here ${H}^s(\mathbb{R}^d)$ denotes the inhomogeneous Sobolev space of order $s$. 
Cowling \cite{C83} proved that \eqref{max} holds for $q=2$ and $s>1/2$,  and it was shown by  Walther \cite{Wa99} that the regularity requirement $s>1/2$ is sharp.
 Later, Rogers and Villarroya \cite{RV} extended the estimate \eqref{max} to general \( q \in (2, \infty] \). It should be noted  that the estimate \eqref{max} is valid only for \( q \geq 2 \), as  can be justified by making use of  translation invariance of the wave operator.

Let us set $q_\circ(d)= \frac{2(d+1)}{d-1}$ and 
\[ s_c(q,d): =\begin{cases} \frac {d+1}4-\frac{d-1}{2q}, \ & 2\le q< q_\circ(d),
	\\[4pt]
	\ \ \  \frac d2-\frac dq,\ &  q_\circ(d)\le q\le \infty.
\end{cases} \]
It was shown in \cite{RV} that \eqref{max} holds if $s>s_c(q,d)$, and \eqref{max} fails to hold if $s<s_c(q,d)$.  
However, most of the estimates for the endpoint case $s=s_c(q,d)$  have remained  open until recently.

Previously, there were very few results addressing the estimate of critical index \( s = s_c(q, d) \). For \( d = 3 \) and \(q \in [6, \infty) \),  Beceanu and Goldberg \cite{BG12}, using the reversed-norm Strichartz inequality and the \( TT^* \) argument, proved that  
\[
\|\sup_{t \in \mathbb{R}} |e^{it\sqrt{-\Delta}} f|\|_{L^{q}(\mathbb{R}^3)} \leq C \|f\|_{\dot{H}^{s_c(q, d)}(\mathbb{R}^3)}.
\]
This estimate is in fact equivalent to the seemingly weaker estimate \eqref{max} with $s= s_c(q,3)$;  see Proposition \ref{prop:local-global to global} below.  From the perspective of the global in time maximal estimate (e.g., see \eqref{global-global0}), their result does not cover the case \( q \in [4, 6) \). Later, Machihara \cite{SM13} obtained  the estimate \eqref{global-global0}  for \( d = 3 \), \( q = 4 \), and \( s_c(4,3) = {3}/{4} \) with radial functions \( f \) satisfying a certain monotonicity property.

In this paper, we provide an almost complete answer to the endpoint estimates. We obtain strong type estimates for the critical case $s=s_c(q,d)$ with $q\in(2, \infty)\setminus\{q_\circ(d)\}$.

\begin{theorem}\label{thm:max}
	Let $q\in(2, \infty)\setminus\{q_\circ(d)\} $. Then, we have the estimate 
	\begin{equation}\label{max-c}
	\big\| \sup_{t \in (0,1)} \big|e^{it\sqrt{-\Delta}}f\big|\big\|_{L^{q}(\mathbb{R}^d)} \le C_{d,q,s} \|f\|_{{H}^{s_c(q,d)}(\mathbb{R}^d)}. 
\end{equation}
\end{theorem}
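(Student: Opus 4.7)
My plan is to handle the two regimes of $q$ determined by $s_c(q,d)$ via different reductions, exploiting the equivalence results advertised in the abstract. For $q>q_\circ(d)$, where $s_c(q,d)=d/2-d/q$ has exact Sobolev scaling, I would apply Proposition~\ref{prop:local-global to global} to replace \eqref{max-c} by the scale-invariant global-in-time estimate
$$\big\|\sup_{t\in\mathbb{R}}|e^{it\sqrt{-\Delta}}f|\big\|_{L^q(\mathbb{R}^d)}\lesssim \|f\|_{\dot{H}^{s_c(q,d)}(\mathbb{R}^d)}.$$
For $2<q<q_\circ(d)$, the critical index $s_c(q,d)=(d+1)/4-(d-1)/(2q)$ lies above the Sobolev scaling exponent, so no scale-invariant reduction is possible and I would retain the local-in-time form. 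In either case the next step is a Littlewood--Paley decomposition $f=\sum_k P_k f$.

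For the single-scale estimates, in the first regime a dyadic rescaling reduces everything to the frequency-one bound $\|\sup_t|e^{it\sqrt{-\Delta}}P_0 f|\|_{L^q}\lesssim \|P_0 f\|_{L^2}$, which I would establish by a $TT^*$ computation on the reversed-norm space $L^q_x L^\infty_t$, extending the Beceanu--Goldberg argument \cite{BG12} from $d=3$ to all dimensions via the dispersive bounds for the wave kernel. In the second regime, the target single-scale bound is
$$\big\|\sup_{t\in(0,1)}|e^{it\sqrt{-\Delta}}P_k f|\big\|_{L^q}\lesssim 2^{k s_c(q,d)}\|P_k f\|_{L^2},$$
and because $s_c(q,d)$ is exactly the Stein--Tomas/adjoint-restriction exponent for the light cone, this bound should follow from a reversed-norm cone restriction inequality, exploiting the Fourier decay of the cone surface measure.

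The central obstacle is the endpoint dyadic summation: a naive triangle inequality only yields the weaker Besov endpoint $\dot{B}^{s_c}_{2,1}\to L^q$ (or its inhomogeneous analogue), whereas the Sobolev endpoint requires an $\ell^2$-summation of the single-scale bounds. Overcoming this is the main new ingredient; my approach would be to prove an almost-orthogonality between the frequency-localized maximal functions $\sup_t|e^{it\sqrt{-\Delta}}P_k f|$ at distinct dyadic scales, combined with a Littlewood--Paley square function estimate in $L^q$ for $q>2$, possibly realized through a time-frequency decomposition of the maximizing $t$'s or a Khintchine-type randomization. The exclusion of $q=q_\circ(d)$ reflects the meeting point of the two regimes, where this almost-orthogonality argument is expected to degenerate logarithmically.
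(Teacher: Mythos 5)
Your proposal correctly isolates the central obstruction — the dyadic summation at the endpoint — but the mechanism you propose for overcoming it is the weak point, and it is not the mechanism the paper uses. Almost orthogonality between the maximal pieces $\sup_t|e^{it\sqrt{-\Delta}}P_kf|$ is problematic because the supremum over $t$ destroys the frequency localization in $x$ that one would need to run a square-function or Khintchine argument in $L^q$; there is no obvious way to extract the needed $\ell^2$-summability from the individual critical single-scale bounds alone, and indeed if there were one would not expect the case $q=q_\circ(d)$ to remain open. The paper's route is different in an essential way: it proves not just the $p=2$ single-scale bound but an entire family of frequency-localized $L^p\to L^q$ local smoothing estimates $\|e^{it\sqrt{-\Delta}}f\|_{L^q(\mathbb{R}^d\times I_\circ)}\lesssim N^{\beta(p,q)}\|f\|_{L^p}$ over the two triangles $\mathcal T_u,\mathcal T_l$ (Proposition~\ref{prop:wave}), obtained via the Wolff--Tao bilinear cone restriction theorem together with a Whitney-type angular decomposition and the orthogonality lemma of Tao--Vargas (Lemma~\ref{TVV-lemma}). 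The summation is then done by Bourgain's interpolation trick (Lemma~\ref{B-Sum-Tri}): choosing two nearby exponent pairs $(p_0,q_0),(p_1,q_1)$ on the critical line with the $j$-exponent of opposite signs, one sums the geometric tails to a restricted weak-type bound at $(2,q_\ast)$, and real interpolation among these restricted weak-type estimates upgrades to the strong $H^{s_c}\to L^q$ bound. So the endpoint is bought by the extra $L^p$-flexibility, not by orthogonality of the maximal pieces.

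Two further remarks. First, your plan for $q>q_\circ(d)$ — proving the scale-invariant global bound directly by extending Beceanu--Goldberg's reversed-norm $TT^*$ argument to all $d$ — is not the paper's route and is not obviously feasible: the $d=3$ argument relies on the explicit Kirchhoff structure of the wave propagator, and the paper instead proves the local-in-time estimate by the same bilinear-restriction mechanism for all $q$ and then deduces the global one from Proposition~\ref{prop:local-global to global} (together with a Lorentz refinement obtained by real interpolation, cf.~Remark~\ref{lolo}). Second, the passage from the space-time local smoothing bound to the maximal bound in the paper is the elementary Sobolev-in-$t$ inequality (\cite[Lemma 1.3]{L}), which costs a factor $N^{1/q}$; this is compatible with the $L^p\to L^q$ scheme but is not a reversed-norm cone restriction estimate as you suggest. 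In short, your reduction skeleton (Littlewood--Paley, single-scale bound, endpoint summation) is sound, but the single-scale ingredient and especially the summation device are different from, and weaker than, what the paper actually uses.
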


The case $q= q_\circ(d)$  remains open. Unfortunately, it looks unlikely that the method in this paper recovers this case. 
However,  for $q= q_\circ(d)$, we have the estimate 
	\Be  \label{max2}  \| \sup_{t\in (1,2)} |e^{it\sqrt{-\Delta}} f|\|_{L^{q_\circ(d),\infty}(\mathbb R^d)} \le C_{d,q,s} \|f\|_{H^{s_c(q_\circ(d),d),1}(\mathbb R^d)},  \Ee 
	where $H^{s,1}(\mathbb R^d)$ denotes the nonhomogeneous Sobolev--Lorentz space with the norm  $\|f\|_{H^{s,1}(\mathbb R^d)}=\|(1-\Delta)^{s/2}f\|_{L^{2,1}}.$
We refer the reader to Section \ref{caseqc} for more details regarding the estimate \eqref{max2}. 

We also make a couple of remarks on the cases $q=2$ and $q=\infty$.  In fact, it was proved by Ham--Ko--Lee \cite[Lemma A.2]{HKL}  that 
	\[  \| \sup_{t\in (0,1)} |e^{it\sqrt{-\Delta}} f|\|_{L^{q,\infty}(B^d(0,1))} \le C\|f\|_{H^{1/2}(\mathbb R^d)}  \] 
fails for any $q\ge 1$. In particular,  the weak type estimate is not possible  for $q=2$.  It is easy to see that 
the estimate \eqref{max}  fails for $q=\infty$ and $s=s_c(\infty, d)=d/2$   from failure of  the Sobolev imbedding  $H^{d/2} \hookrightarrow L^\infty$ (see, for example, \cite[Remark 3]{FW}).  

To establish the endpoint estimates in Theorem \ref{thm:max}, we consider not only the \(H^s\)--\(L^q\) estimates but also the broader framework of \(L^p_s\)--\(L^q\) estimates. Specifically, the desired \(H^s\)--\(L^q\) estimates will be derived by proving various optimal (endpoint case) \(L^p_s\)--\(L^q\) estimates. Here \(L^p_s\) denotes the $L^p$ Sobolev spaces of order $s$ so that \(L^2_s=H^s\).  For this purpose, we adopt the approach developed in \cite{L}, which leverages the bilinear restriction estimates for the cone due to Wolff \cite{W} and Tao \cite{T}.    As is evident to experts in the field, various endpoint maximal estimates for \(f \in L_s^p\) over an extended range can also be shown by using currently available results on local smoothing estimates for the wave operator. However, a complete resolution of this problem is closely linked to the local smoothing estimates with optimal regularity, which remain unresolved to date except for some special cases. We do not attempt to address this matter here.

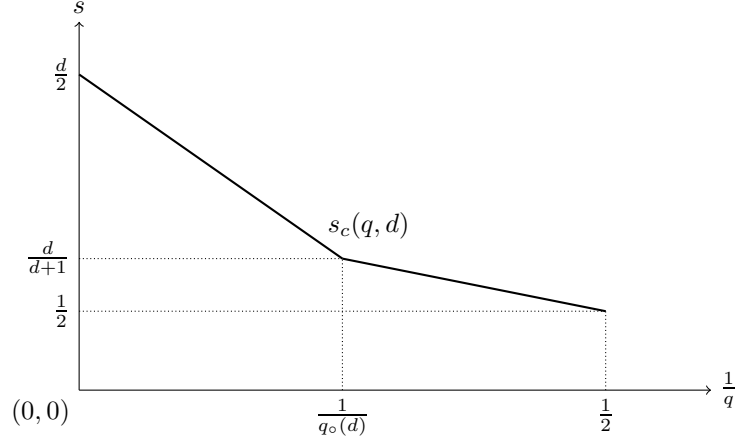
\begin{figure}[t]
\begin{center}
\begin{tikzpicture}[scale=7]
    \draw[->] (0,0) -- (1.2,0) node[right] {\(\frac1q\)};
    \draw[->] (0,0) -- (0,0.7) node[above] {\(s\)};
        
     \draw[densely dotted] (1,0) -- (1,0.15);
     \draw[densely dotted] (0.5,0) -- (0.5,0.25);
     \draw[densely dotted] (0,0.25) -- (0.5,0.25);
     \draw[densely dotted] (0,0.15) -- (1,0.15);

    \draw[thick] (0,0.6) -- (0.5,0.25);
    \draw[thick] (1,0.15) -- (0.5,0.25);
    \node[above] at (0.55,0.27) {\(s_c(q,d)\)};

    \node[below left] at (0,0) {\((0,0)\)};
    \node[left] at (0,0.6) {\(\frac d2\)};
    \node[left] at (0,0.25) {\(\frac{d}{d+1}\)};
    \node[left] at (0,0.15) {\(\frac12\)};
    \node[below] at (0.5,0) {\(\frac 1{q_\circ(d)}\)};
    \node[below] at (1,0) {\(\frac12\)};
    
\end{tikzpicture}
\caption{The critical regularity  exponent $s_c(q,d)$ of the estimate \eqref{max}.}
\end{center}
\end{figure}

\subsection{Local time-space maximal estimate}
Several variant forms of the estimate \eqref{max} were also studied in \cite{RV}. These variants can be applied to different purposes depending on the nature of the problem under consideration. However, as will be seen below, they turn out to be essentially equivalent. To the best of our knowledge, such equivalences have not been formally established before.

Let us first consider the local time-space estimate
\begin{equation}\label{local-local}
	\big\| \sup_{t \in (0,1)} \big|e^{it\sqrt{-\Delta}}f\big|\big\|_{L^{q}(B^d(0,1))} \le C_{n,q,s} \|f\|_{H^s(\mathbb{R}^d)}.
\end{equation}
Trivially, this estimate follows from \eqref{max}. However, thanks to finite speed of propagation and translation invariance of  the wave operator, 
 the estimate \eqref{local-local} implies \eqref{max} if $q\ge 2$. Consequently, we have the next proposition. 

\begin{proposition}\label{prop:xlocal to global}
Let $q\ge 2$ and $s\ge 0$. Then, the estimates  \eqref{max}  and  \eqref{local-local} are equivalent.
\end{proposition}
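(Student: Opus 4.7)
The direction \eqref{max}$\Rightarrow$\eqref{local-local} is immediate since $B^d(0,1)\subset\mathbb{R}^d$. For the converse, by translation invariance of the half-wave group $e^{it\sqrt{-\Delta}}$ and the Sobolev norm, \eqref{local-local} yields, for every $y\in\mathbb{R}^d$,
\[
\Big\|\sup_{t\in(0,1)}\bigl|e^{it\sqrt{-\Delta}}f\bigr|\Big\|_{L^q(B^d(y,1))}\le C\|f\|_{H^s}
\]
with $C$ independent of $y$. Covering $\mathbb{R}^d$ by balls $\{B^d(y_k,1)\}$ indexed by a lattice with bounded overlap gives
\[
\Big\|\sup_{t\in(0,1)}\bigl|e^{it\sqrt{-\Delta}}f\bigr|\Big\|_{L^q(\mathbb{R}^d)}^q\lesssim\sum_k\Big\|\sup_{t\in(0,1)}\bigl|e^{it\sqrt{-\Delta}}f\bigr|\Big\|_{L^q(B^d(y_k,1))}^q.
\]
The naive translated bound would make this sum diverge; the plan is to upgrade each term to a \emph{localized} estimate $\lesssim\|\phi_k f\|_{H^s}$ for a smooth bump $\phi_k$ adapted to a fixed enlargement of $B^d(y_k,1)$, so that $\ell^2\hookrightarrow\ell^q$ (valid for $q\ge 2$) together with the almost-orthogonality $\sum_k\|\phi_k f\|_{H^s}^2\lesssim\|f\|_{H^s}^2$ closes the argument.

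The localization will be obtained via finite speed of propagation. Although $e^{it\sqrt{-\Delta}}$ alone does not have finite speed (its kernel decays only polynomially outside the light cone), the function $u(x,t)=e^{it\sqrt{-\Delta}}f(x)$ satisfies the wave equation $\partial_t^2 u=\Delta u$ with Cauchy data $(f,\,i\sqrt{-\Delta}f)$. Choosing $\phi_k\in C_c^\infty$ with $\phi_k\equiv1$ on $B^d(y_k,2)$ and $\supp\phi_k\subset B^d(y_k,3)$, the modified Cauchy data $(\phi_k f,\,i\phi_k\sqrt{-\Delta}f)$ coincides with the original one on $B^d(y_k,2)$, so by unit speed the two wave solutions agree on $B^d(y_k,1)\times(0,1)$. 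Writing $\phi_k\sqrt{-\Delta}f=\sqrt{-\Delta}(\phi_k f)+[\phi_k,\sqrt{-\Delta}]f$ and assembling the cosine and sine contributions, the modified solution is
\[
e^{it\sqrt{-\Delta}}(\phi_k f)+i\,\frac{\sin(t\sqrt{-\Delta})}{\sqrt{-\Delta}}\bigl([\phi_k,\sqrt{-\Delta}]f\bigr).
\]
The first term is controlled on $B^d(y_k,1)$ by $\|\phi_k f\|_{H^s}$ via \eqref{local-local}.

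The hard part will be the commutator error. The commutator $[\phi_k,\sqrt{-\Delta}]$ is a pseudodifferential operator of order zero whose Schwartz kernel $(\phi_k(x)-\phi_k(z))k_0(x-z)$, with $k_0$ the kernel of $\sqrt{-\Delta}$, vanishes on $B^d(y_k,2)\times B^d(y_k,2)$ and on the complement of $B^d(y_k,3)\times B^d(y_k,3)$, and decays like $|x-z|^{-(d+1)}$ off the diagonal. The propagator $\sin(t\sqrt{-\Delta})/\sqrt{-\Delta}=(2i)^{-1}\bigl(e^{it\sqrt{-\Delta}}-e^{-it\sqrt{-\Delta}}\bigr)(-\Delta)^{-1/2}$ inherits from \eqref{local-local} a maximal estimate at one derivative lower, and it also enjoys unit speed of propagation, so only the restriction of $[\phi_k,\sqrt{-\Delta}]f$ to $B^d(y_k,2)$ contributes on the target region. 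Combining these two facts, I expect the error to be dominated by a localized Sobolev quantity of $f$ near $B^d(y_k,3)$ together with tails that decay rapidly in $|k|$ and sum absolutely to $\lesssim\|f\|_{H^s}$. The resulting localized estimate then gives \eqref{max} via the $\ell^2\hookrightarrow\ell^q$ and almost-orthogonality steps indicated above.
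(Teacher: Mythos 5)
Your proposal takes a genuinely different route from the paper's proof, and while the skeleton is sound, the hard step you flagged yourself is not closed. The paper avoids the wave-equation structure entirely: it first removes low frequencies via Lemma \ref{short}, tiles $\mathbb{R}^d$ by unit cubes $Q$ with $5$-fold enlargements $\bar Q$, splits $f=f_{\bar Q}+f_{\bar Q^c}$, handles $f_{\bar Q}$ exactly as you do (translated local estimate plus $\ell^q\hookrightarrow\ell^2$ and bounded overlap), and controls $f_{\bar Q^c}$ by a direct nonstationary-phase estimate showing the frequency-truncated kernel $\tilde K(x,t)$ of $(1-\Delta)^{-s/2}e^{it\sqrt{-\Delta}}$ decays like $(1+|x|)^{-N}$ for $t\in(0,1)$, $|x|\ge 2$; the far contribution then reduces to Young's inequality. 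No commutators, no sine propagator.

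The concrete gap in your plan is the claim that $\sin(t\sqrt{-\Delta})/\sqrt{-\Delta}$ inherits from \eqref{local-local} a maximal estimate ``at one derivative lower.'' Writing it as $(2i)^{-1}\bigl(e^{it\sqrt{-\Delta}}-e^{-it\sqrt{-\Delta}}\bigr)(-\Delta)^{-1/2}$ and applying \eqref{local-local} requires $(-\Delta)^{-1/2}g$ to lie in $H^s(\mathbb{R}^d)$, and $(-\Delta)^{-1/2}$ is not bounded on inhomogeneous Sobolev spaces. For $g=\chi_{B^d(y_k,2)}[\phi_k,\sqrt{-\Delta}]f$ (compactly supported, generically nonzero mean) one has $\widehat{g}(0)\ne 0$, so $\|(-\Delta)^{-1/2}g\|_{L^2}=\infty$ when $d=2$, and even for $d\ge3$ the quantity is not controlled by $\|g\|_{H^{s-1}}$. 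You would need a separate low-frequency treatment of the sine propagator (e.g.\ split $g$ via Littlewood--Paley, bound the $P_0$ piece by a Bernstein/$L^\infty$ argument as in Lemma \ref{short}, and apply your claim only to $(1-P_0)g$), which you have not supplied. Beyond that, the passage from ``the commutator kernel decays like $|x-z|^{-(d+1)}$'' to an $\ell^2$-summable bound $\sum_k\|g_k\|_{H^{s-1}}^2\lesssim\|f\|_{H^s}^2$ is left entirely as an expectation; the decay is only polynomial, so the tail bookkeeping is not automatic and needs to be carried out (it should be doable, much as the paper does for $\mathcal E_N*|f|$, but it is not done). Also note that the $H^{s-1}$ you would be landing in may have negative order (the proposition permits all $s\ge0$), so the almost-orthogonality $\sum_k\|\phi_k f\|_{H^s}^2\lesssim\|f\|_{H^s}^2$ you invoke would not directly transfer to the error pieces. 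In short: your framing is natural and could likely be pushed through, but as written the commutator error term is not estimated, and the $(-\Delta)^{-1/2}$ low-frequency obstruction is a genuine missing step, not merely an omitted calculation.
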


An immediate consequence of this equivalence and Theorem \ref{thm:max} is the following.

\begin{corollary}\label{cor:local-local}
Let $q\in(2, \infty)\setminus\{q_\circ(d)\} $. Then, the  estimate \eqref{local-local}  holds if and only if  $s\ge s_c(q,d)$. 
\end{corollary}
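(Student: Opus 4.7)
The plan is to observe that the corollary is explicitly advertised as an immediate consequence, so no new machinery is required; the task is simply to assemble three already-stated ingredients: Theorem \ref{thm:max}, Proposition \ref{prop:xlocal to global}, and the necessity half of the Rogers--Villarroya result in \cite{RV}.

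For the sufficiency half, I would start with the critical case $s=s_c(q,d)$. Under the hypothesis $q\in(2,\infty)\setminus\{q_\circ(d)\}$, Theorem \ref{thm:max} already supplies the global-in-space endpoint estimate \eqref{max-c}. Since $\|\cdot\|_{L^q(B^d(0,1))}\le \|\cdot\|_{L^q(\mathbb R^d)}$, restricting the left-hand side to the unit ball yields \eqref{local-local} at the critical regularity. For $s>s_c(q,d)$, I would appeal to the continuous embedding $H^s(\mathbb R^d)\hookrightarrow H^{s_c(q,d)}(\mathbb R^d)$, which automatically upgrades \eqref{local-local} to all larger values of $s$.

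For the necessity half, I would argue by contradiction. Suppose \eqref{local-local} were to hold for some $s<s_c(q,d)$. Since $q\ge 2$ and $s\ge 0$ in the relevant range, Proposition \ref{prop:xlocal to global} applies and forces the global-in-space estimate \eqref{max} to hold at that same $s$. This contradicts the sharpness statement of Rogers--Villarroya \cite{RV}, which asserts that \eqref{max} fails whenever $s<s_c(q,d)$. Hence \eqref{local-local} forces $s\ge s_c(q,d)$.

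I do not foresee any genuine obstacle. The delicate analytic work is entirely encapsulated in Theorem \ref{thm:max} (the positive endpoint bound) and in Proposition \ref{prop:xlocal to global} (the passage between localized and globalized spatial norms using finite speed of propagation plus translation invariance); the counterexamples at subcritical regularity come directly from \cite{RV}. The corollary thus reduces to a one-line synthesis of these three results.
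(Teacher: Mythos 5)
Your proposal is correct and matches the paper's intended argument exactly: the paper states that the corollary is ``an immediate consequence'' of Proposition \ref{prop:xlocal to global} and Theorem \ref{thm:max}, and you correctly identify the three ingredients (the endpoint bound \eqref{max-c}, the local-to-global equivalence, and the Rogers--Villarroya sharpness/necessity), together with the Sobolev embedding $H^s\hookrightarrow H^{s_c(q,d)}$ to cover $s>s_c(q,d)$.
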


\subsection{Global in time and  local in space maximal estimate}
We also consider the  estimate
\begin{equation}\label{global-local}
	\big\| \sup_{t \in \mathbb R} \big|e^{it\sqrt{-\Delta}}f\big|\big\|_{L^{q}(B^d(0,1))} \le C_{n,q,s} \|f\|_{H^s(\mathbb{R}^d)},
\end{equation}
which is global in time and local in space.   At first glance, the implication from the estimate \eqref{local-local} to \eqref{global-local} is not immediately clear while 
the opposite direction is trivial.  However, it turns out that they are equivalent when $q\ge 2$.
\begin{proposition}\label{prop:local to global}
Let $q\ge 2$ and $s\ge 0$. Then, the estimates \eqref{local-local} and \eqref{global-local} are equivalent.
\end{proposition}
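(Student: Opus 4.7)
The direction \eqref{global-local}$\Rightarrow$\eqref{local-local} is immediate since $\sup_{t\in(0,1)}|\cdot|\le\sup_{t\in\mathbb{R}}|\cdot|$. For the converse, assume \eqref{local-local}.

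The first move is to partition $\mathbb{R}=\bigsqcup_{k\in\mathbb{Z}}[k,k+1)$ and, for $t=k+\tau$ with $\tau\in[0,1)$, use the group property
\[
e^{it\sqrt{-\Delta}}f=e^{i\tau\sqrt{-\Delta}}g_k,\qquad g_k:=e^{ik\sqrt{-\Delta}}f,
\]
together with the $H^s$-isometry $\|g_k\|_{H^s}=\|f\|_{H^s}$. Applying \eqref{local-local} to $g_k$ gives, uniformly in $k$,
\[
\Bigl\|\sup_{t\in[k,k+1)}|e^{it\sqrt{-\Delta}}f|\Bigr\|_{L^{q}(B^d(0,1))}\le C\|f\|_{H^s}.
\]
This bound is $k$-independent, so a naive $\ell^q$-sum over $k$ diverges; the needed improvement must come from exploiting the spatial localization to $B^d(0,1)$.

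To that end, I would decompose $f=\sum_{j\ge 0}\psi_j f$ via a smooth spatial partition of unity on dyadic annuli $A_j=\{|y|\sim 2^j\}$, and argue that on $B^d(0,1)$ the piece $e^{it\sqrt{-\Delta}}(\psi_j f)$ is significant only when $|t|\sim 2^j$. For $|t|\ll 2^j$ this is finite speed of propagation, applied to $\cos(t\sqrt{-\Delta})$ and $\sin(t\sqrt{-\Delta})/\sqrt{-\Delta}$ separately; the nonlocality of the extra $\sqrt{-\Delta}$ needed to recover $\sin(t\sqrt{-\Delta})$ is absorbed via a dyadic frequency decomposition, on each band of which $\sqrt{-\Delta}$ acts essentially as a bounded multiplier. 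For $|t|\gg 2^j$ the smallness comes from dispersive decay on $B^d(0,1)$, of order $\langle t\rangle^{-(d-1)/2}$, again established band by band. Combined, these give a rapid off-diagonal decay in $||k|-2^j|$ for the per-interval quantity $\|\sup_{t\in[k,k+1)}|e^{it\sqrt{-\Delta}}(\psi_j f)|\|_{L^{q}(B^d(0,1))}$.

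On the diagonal $|k|\sim 2^j$, the per-unit-interval bound from \eqref{local-local}, combined with the dispersive decay distributed over the $O(2^j)$ unit intervals on which the wave is passing through $B^d(0,1)$, controls the contribution of $\psi_j f$. Summing first in $k$ for fixed $j$ and then in $j$ using $\ell^q\supset\ell^2$ together with the almost-orthogonality $\sum_j\|\psi_j f\|_{H^s}^2\lesssim\|f\|_{H^s}^2$ (which is where $q\ge 2$ is used) should then produce \eqref{global-local}. The main technical obstacle I anticipate is the quantitative off-cone control of $e^{it\sqrt{-\Delta}}$ at each dyadic frequency scale, so that both the pseudo-local tails and the dispersive tails conspire to keep the $(j,k)$-double sum bounded by a multiple of $\|f\|_{H^s}^q$; this is a delicate but essentially standard microlocal computation.
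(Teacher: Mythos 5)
Your easy direction is correct, and the first reduction (uniform unit-interval bounds via the $H^s$-unitarity of $e^{it\sqrt{-\Delta}}$) matches the paper. Beyond that, however, the paper takes a quite different route that your proposal misses, and your proposal has a genuine gap precisely where you flag a ``delicate but essentially standard microlocal computation.'' The paper dualizes the estimate (with $T=U_t^s$ as in \eqref{def-u}), partitions $\mathbb{R}$ into unit intervals $J$, and splits the quadratic form $\sum_{J,J'}\langle T^*g_{\B,J},T^*g_{\B,J'}\rangle$ into near-diagonal pairs $\dist(J,J')<4$ (handled by Cauchy--Schwarz and the local estimate, using $q\ge2$ through the Minkowski step \eqref{easy}) and far pairs $\dist(J,J')\ge4$ (handled by a kernel bound for $TT^*$ and Schur's test). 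No spatial decomposition of $f$ is needed; the spatial localization to $\mathbb B$ is exploited implicitly through $TT^*$.

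The gap in your argument is the diagonal $|k|\sim 2^j$. The per-interval bound you deduce from \eqref{local-local} gives $\|\psi_j f\|_{H^s}$ per unit interval, and there are $\sim 2^j$ intervals at that distance, so the naive $\ell^q$-sum over $k$ produces an unrecoverable factor $2^{j/q}$; you need a quantitative gain there, and ``dispersive decay distributed over the $O(2^j)$ intervals'' is not an argument. Worse, the decay rate you invoke, $\langle t\rangle^{-(d-1)/2}$, is the \emph{on-cone} dispersive rate (an $L^1\to L^\infty$ bound at a fixed frequency scale) and is the wrong object here. The relevant estimate is for the $TT^*$ kernel $K(x,t)$ of \eqref{KK} in the ``off-cone'' regime $|x|\le 2\ll|t|$, where there is no stationary point and each Littlewood--Paley piece decays like $|t|^{-\infty}$; what limits the overall decay is the low-frequency singularity of $|\xi|$, yielding $|K(x,t)|\lesssim|t|^{-d}$ (Lemma \ref{kernel}). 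Even if you could plug your rate into a Schur-type summation, $\langle t\rangle^{-(d-1)/2}$ is not summable over unit intervals when $d=2$ and is only borderline when $d=3$, so the argument would fail in exactly the low-dimensional cases. To make your scheme work you would essentially have to rediscover the off-cone kernel bound, at which point the $TT^*$ organization is the cleaner way to package the proof.
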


Thanks to the equivalence  and Corollary \ref{cor:local-local}, we obtain the following.

\begin{corollary}\label{cor:global-local}
Let $q\in(2, \infty)\setminus\{q_\circ(d)\} $. Then \eqref{global-local} holds if and only if  $s\ge s_c(q,d)$. 
\end{corollary}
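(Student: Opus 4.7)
The plan is to deduce Corollary \ref{cor:global-local} directly by chaining Proposition \ref{prop:local to global} with Corollary \ref{cor:local-local}, since both ingredients are already in hand for the range $q \in (2,\infty)\setminus\{q_\circ(d)\}$. There is no real obstacle to overcome: the statement is a formal consequence of what has already been proved, and the whole argument is little more than bookkeeping.

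For the sufficiency direction, I would assume $s \ge s_c(q,d)$. By Corollary \ref{cor:local-local}, the local-in-time, local-in-space estimate \eqref{local-local} holds at this regularity. Since $q \ge 2$ and $s \ge 0$, Proposition \ref{prop:local to global} is applicable and gives the equivalence of \eqref{local-local} with the global-in-time, local-in-space estimate \eqref{global-local}. Hence \eqref{global-local} holds, producing the desired bound with the same constant (up to an absolute factor) as the one furnished by \eqref{local-local}.

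For the necessity direction, suppose \eqref{global-local} holds for some $s \ge 0$. Restricting the supremum to $t \in (0,1) \subset \mathbb{R}$ trivially yields \eqref{local-local}; one may also simply invoke the (easy half of the) equivalence in Proposition \ref{prop:local to global}. Then the failure statement in Corollary \ref{cor:local-local}, which asserts that \eqref{local-local} can hold only if $s \ge s_c(q,d)$ on the present range of $q$, forces $s \ge s_c(q,d)$.

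Combining the two directions gives the stated ``if and only if''. The only thing worth double-checking when writing this up is that both \eqref{local-local} $\Leftrightarrow$ \eqref{global-local} (Proposition \ref{prop:local to global}) and the characterization in Corollary \ref{cor:local-local} genuinely apply on the full range $q \in (2,\infty)\setminus\{q_\circ(d)\}$, which they do since Proposition \ref{prop:local to global} is valid for all $q \ge 2$ and $s \ge 0$, and Corollary \ref{cor:local-local} excludes precisely the same borderline exponent $q_\circ(d)$.
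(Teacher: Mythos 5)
Your argument is correct and is precisely the paper's own proof: Corollary \ref{cor:global-local} is stated in the text as an immediate consequence of Proposition \ref{prop:local to global} together with Corollary \ref{cor:local-local}. Both the sufficiency and the necessity directions are handled exactly as you describe, so there is nothing to add.
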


\subsection{Global time-space maximal estimate}
We now discuss the global time-space maximal estimate
\begin{equation}\label{global-global0}
	\big\| \sup_{t \in \mathbb R} \big|e^{it\sqrt{-\Delta}}f\big|\big\|_{L^{q}(\mathbb R^d)} \le C_{n,q,s} \|f\|_{\dot{H}^s(\mathbb{R}^d)},
\end{equation}
where $\dot{H}^s(\mathbb{R}^d)$ denotes the homogeneous Sobolev space of order $s$. As can be easily seen by a scaling argument,  for the global estimate \eqref{global-global0}  we need to use the homogeneous Sobolev space $\dot{H}^s$. Moreover,   the scaling condition $s = \frac{d}2-\frac{d}q$ has to be satisfied. 

\begin{proposition}
\label{prop:local-global to global}
Let $q\ge q_\circ(d)$ and $s= s_c(q,d)= \frac{d}{2}-\frac{d}{q}$. Then, the estimates \eqref{local-local} and \eqref{global-global0} are equivalent.
\end{proposition}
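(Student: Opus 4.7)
My plan is to treat the two directions separately. The easier direction \eqref{global-global0} $\Rightarrow$ \eqref{local-local} follows from a Littlewood--Paley split $f = P_{\le 1}f + P_{>1}f$: since $q \ge q_\circ(d)$ forces $s = d/2 - d/q \ge d/(d+1) > 0$, I have $\|P_{\le 1}f\|_{\dot H^s} \lesssim \|P_{\le 1}f\|_{L^2} \le \|f\|_{H^s}$ (using that the relevant frequencies are bounded and $s \ge 0$) and $\|P_{>1}f\|_{\dot H^s} \lesssim \|P_{>1}f\|_{H^s} \le \|f\|_{H^s}$. Applying \eqref{global-global0} to each piece therefore gives the even stronger estimate $\|\sup_t|e^{it\sqrt{-\Delta}}f|\|_{L^q(\mathbb R^d)} \lesssim \|f\|_{H^s}$, which trivially restricts to \eqref{local-local}.

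The substantive direction \eqref{local-local} $\Rightarrow$ \eqref{global-global0} I plan to prove by scaling. For $\lambda > 0$, set $f_\lambda(x) := f(\lambda x)$ and note the elementary identity $e^{it\sqrt{-\Delta}}f_\lambda(x) = (e^{it\lambda\sqrt{-\Delta}}f)(\lambda x)$. Applying \eqref{local-local} to $f_\lambda$, substituting $y = \lambda x$ on the left, and using $\|f_\lambda\|_{L^2} = \lambda^{-d/2}\|f\|_{L^2}$ and $\|f_\lambda\|_{\dot H^s} = \lambda^{s - d/2}\|f\|_{\dot H^s}$, the factor $\lambda^{-d/q}$ produced by the change of variables exactly cancels the $\dot H^s$ dilation at the critical exponent $s = d/2 - d/q$, yielding
\[
\big\|\sup_{\tau \in (0,\lambda)}|e^{i\tau\sqrt{-\Delta}}f|\big\|_{L^q(B^d(0,\lambda))} \lesssim \lambda^{-s}\|f\|_{L^2} + \|f\|_{\dot H^s}.
\]
Since $s > 0$, for $f \in L^2 \cap \dot H^s$ I can send $\lambda \to \infty$ and invoke monotone convergence to kill the $L^2$ term and upgrade the left-hand side to $\|\sup_{\tau > 0}|e^{i\tau\sqrt{-\Delta}}f|\|_{L^q(\mathbb R^d)}$; the result then extends to all of $\dot H^s$ by density (available since $0 < s < d/2$ on the assumed range of $q$). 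The negative-time part follows from the identity $\overline{e^{-it\sqrt{-\Delta}}f(x)} = e^{it\sqrt{-\Delta}}g(x)$ with $g(x) := \overline{f(-x)}$, obtained by substituting $\xi \mapsto -\xi$ in the Fourier representation, combined with $\|g\|_{\dot H^s} = \|f\|_{\dot H^s}$.

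I do not foresee a serious obstacle: all of the content sits in the clean scaling identity, together with the positivity $s > 0$ that both enables the limit $\lambda \to \infty$ and places us in the range where $L^2 \cap \dot H^s$ is dense in $\dot H^s$. The only bookkeeping items will be verifying the scaling identity for $e^{it\sqrt{-\Delta}}$ and justifying the monotone limit plus density extension, neither of which involves deep analysis.
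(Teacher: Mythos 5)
Your argument is correct and follows essentially the same scaling strategy as the paper: dilate $f$, exploit that at $s=s_c(q,d)=\tfrac d2-\tfrac dq$ the $L^q$ Jacobian factor exactly cancels the $\dot H^s$ dilation factor, and send the scaling parameter to infinity (the paper renormalizes the operator to the homogeneous weight before scaling, while you split $\|f_\lambda\|_{H^s}\lesssim\|f_\lambda\|_{L^2}+\|f_\lambda\|_{\dot H^s}$; these are equivalent bookkeeping choices). Only a cosmetic slip: the complex-conjugation identity should read $\overline{e^{-it\sqrt{-\Delta}}f}=e^{it\sqrt{-\Delta}}\overline f$ rather than with $g(x)=\overline{f(-x)}$, but this does not affect the norms involved.
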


It is easy to see that the equivalence in Proposition \ref{prop:local-global to global} readily generalizes to the Lorentz spaces $L^{q,r}$ by replacing $L^q$ (see {\bf (G4)} in {\it Generalizations} below and Section \ref{proof-scaling} below). Furthermore, as to be seen later, by real interpolation the estimate  \eqref{max-c}  extends to the ${H}^{s_c(q,d)}$--$L^{q,2}$ estimate (see Remark \ref{lolo}).  
Consequently, we have the following. 

\begin{theorem}\label{cor:local-global}
Let $q\in (q_\circ(d),\infty)$. Then \eqref{global-global0} holds with $s=s_c(q,d)$. Moreover, we have
\begin{equation*}\label{global-globall}
	\big\| \sup_{t \in \mathbb R} \big|e^{it\sqrt{-\Delta}}f\big|\big\|_{L^{q,2}(\mathbb R^d)} \le C_{n,q,s}  \|f\|_{\dot H^{s_c(q,d)}(\mathbb R^d)}. 
\end{equation*}
\end{theorem}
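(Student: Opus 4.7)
The plan is to combine three ingredients already developed in the paper: the local-in-time endpoint estimate of Theorem \ref{thm:max}, a real interpolation argument that upgrades the target to a Lorentz space (as announced in the discussion preceding the theorem and in Remark \ref{lolo}), and the Lorentz-valued extension of Proposition \ref{prop:local-global to global}. All three are available at this point, so the proof consists of chaining them in the right order.

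First I would fix $q \in (q_\circ(d),\infty)$ and choose $q_1, q_2$ with $q_\circ(d) < q_1 < q < q_2 < \infty$. On this range the critical Sobolev exponent is $s_c(q_i,d) = d/2 - d/q_i$, which is affine in $1/q_i$; hence $(s_c(q,d), 1/q)$ lies on the segment joining $(s_c(q_i,d), 1/q_i)$, $i = 1, 2$, for a unique $\theta \in (0,1)$. Applying Theorem \ref{thm:max} at $q = q_1$ and $q = q_2$ yields the local-in-time endpoint estimates
\begin{equation*}
  \bigl\|\sup_{t \in (0,1)}|e^{it\sqrt{-\Delta}}f|\bigr\|_{L^{q_i}(\mathbb{R}^d)} \le C \|f\|_{H^{s_c(q_i,d)}(\mathbb{R}^d)}, \qquad i=1,2.
\end{equation*}
The operator $f \mapsto \sup_t |e^{it\sqrt{-\Delta}}f|$ is sublinear, so real interpolation with second parameter $2$ gives
\begin{equation*}
  \bigl\|\sup_{t \in (0,1)}|e^{it\sqrt{-\Delta}}f|\bigr\|_{L^{q,2}(\mathbb{R}^d)} \le C \|f\|_{H^{s_c(q,d)}(\mathbb{R}^d)},
\end{equation*}
using the standard identifications $(L^{q_1},L^{q_2})_{\theta,2} = L^{q,2}$ and $(H^{s_c(q_1,d)}, H^{s_c(q_2,d)})_{\theta,2} = B^{s_c(q,d)}_{2,2} = H^{s_c(q,d)}$.

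To finish I would invoke the Lorentz-valued version of Proposition \ref{prop:local-global to global}; since $s_c(q,d) = d/2 - d/q$ is scaling invariant on the range $q \ge q_\circ(d)$, the same parabolic scaling that drives the proposition converts the inequality above into the global-in-time, homogeneous Sobolev estimate
\begin{equation*}
  \bigl\|\sup_{t \in \mathbb{R}}|e^{it\sqrt{-\Delta}}f|\bigr\|_{L^{q,2}(\mathbb{R}^d)} \le C \|f\|_{\dot{H}^{s_c(q,d)}(\mathbb{R}^d)},
\end{equation*}
which is exactly the ``Moreover'' part of Theorem \ref{cor:local-global}. The first assertion \eqref{global-global0} then follows from the elementary embedding $L^{q,2} \hookrightarrow L^q$ valid for $q > 2$.

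The potential obstacles are essentially bookkeeping rather than genuine analytic difficulty. One must check that real interpolation of sublinear operators applies in the form we need, which is standard via the general Marcinkiewicz-type formulation, and that the proof of Proposition \ref{prop:local-global to global} genuinely extends to Lorentz target spaces. The latter is straightforward since the local-to-global transition relies only on dyadic parabolic scaling together with a Littlewood--Paley decomposition, neither of which cares about the specific rearrangement-invariant norm used on the spatial side.
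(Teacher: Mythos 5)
Your proposal is correct, but your interpolation step takes a genuinely different route from the paper's. In Remark~\ref{lolo} the paper fixes the target regularity $s_\ast = s_c(q,d)$ once and for all and interpolates the intermediate restricted weak type bounds \eqref{res-weak} or \eqref{sss} (which are $L^{p,1}\to L^{q',\infty}$ estimates for the frequency-localized pieces of $U_t^{s_\ast}$, with $(p,q')$ varying along a line through $(2,q)$ in $\mathcal T_u$ or $\mathcal T_l$), together with Lemma~\ref{short} for the low-frequency part; this produces the $L^2\to L^{q,2}$ bound for $U_t^{s_c(q,d)}$ directly, bypassing Theorem~\ref{thm:max}. You instead treat Theorem~\ref{thm:max} as a black box and interpolate the finished strong-type bounds $H^{s_c(q_i,d)}\to L^{q_i}$ at two exponents $q_1 < q < q_2$; what makes this work is the Besov identity $(H^{s_1},H^{s_2})_{\theta,2}=B^{s}_{2,2}=H^{s}$ with $s$ the affine interpolate of $s_1,s_2$, and your check that the affine relation between $s_c$ and $1/q$ on $(q_\circ(d),\infty)$ lands exactly at $s_c(q,d)$. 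Both routes yield the same conclusion; yours is somewhat more modular, while the paper's is more economical because the restricted weak type bounds are already lying around from the proof of Theorem~\ref{thm:max}. The final step — the Lorentz-valued form of Proposition~\ref{prop:local-global to global} followed by the embedding $L^{q,2}\hookrightarrow L^q$ — matches the paper exactly.

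Two small inaccuracies, neither affecting correctness. First, the dilation in Proposition~\ref{prop:local-global to global} is isotropic ($x\mapsto Rx$, $t\mapsto Rt$), since $|\xi|$ is homogeneous of degree one; calling it ``parabolic'' invokes the wrong picture. Second, the proof of Proposition~\ref{prop:local-global to global} does not rest on a Littlewood--Paley decomposition; it is a single $L^2$-preserving dilation $g(x)=R^{d/2}f(Rx)$ followed by the monotone limit $R\to\infty$. Your overall point — that the argument passes to Lorentz targets because the $L^{q,r}$ quasinorm scales the same way as $L^q$ under dilation — is nonetheless correct and is exactly what the paper appeals to.
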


Theorem \ref{cor:local-global} generalizes the result in \cite{BG12}, where only the case \( d = 3 \) and \( q \in [6, \infty) \) was considered.
We close the introduction with some remarks concerning extensions of our results to more general evolution operators.

\subsubsection*{Generalizations.} 
\label{general} For a function $\phi$ that is smooth on $\mathbb R^d\setminus \{0\}$, let  us consider
\[ e^{it\phi(D)} f= (2\pi)^{-d} \int  e^{i(x\cdot\xi +t\phi(\xi))} \widehat f(\xi)\, d\xi.  \] 
In what follows we list some of the possible generalizations, which can be deduced from the arguments in this paper without difficulty. Though further extensions are  possible, we do not attempt  to present them in most general forms.

\smallskip

\noindent {\bf(G1)}  If $\phi$ is a smooth function that is  homogeneous of degree $1$,  and the Hessian matrix of $\phi$ has $d-1$ nonzero eigenvalues of the same sign, then the endpoint estimates in Theorem \ref{thm:max} remain valid for the operator $e^{it\phi(D)}$ replacing the wave operator $e^{it\sqrt{-\Delta}}$. This holds since the same bilinear restriction estimates hold for the surface $(\xi, \phi(\xi))$ (see  Theorem \ref{thm:bi} below). 

\smallskip

\noindent {\bf(G2)} It is natural to expect  that the same endpoint estimates continue to hold for the operator $e^{it\phi(D)}$  whose dispersive symbol $\phi$  is no longer homogeneous, such as the   Klein--Gordon equation.  Indeed, suppose 
that $\phi$ is bounded on every bounded set, and suppose that there are constants $C, R>0$ such that
\Be 
\label{g2g2}
|\partial^\alpha (\phi(\xi) -|\xi|)|\le C|\xi|^{-|\alpha|}
\Ee
for $|\alpha|\le d+1$ if $|\xi|\ge R$. Then, the  estimates in Theorem \ref{thm:max} hold true  for the operator $e^{it\phi(D)}.$  
Typical examples include the operators $e^{it\phi(D)}$ given by  $\phi(\xi)=(1+ |\xi|^\kappa)^{1/\kappa}$ with $\kappa\ge 1$.  
See Section \ref{remark:b} for further details.

\smallskip

\noindent {\bf(G3)}  Proposition \ref{prop:xlocal to global} also remains valid even if  the wave operator $e^{it\sqrt{-\Delta}}$ is replaced by $e^{it\phi(D)}$ provided that there are  constants  $C, R>0$ such that
\Be\label{g3}  |\partial^\alpha \phi(\xi)|\le  C|\xi|^{1-|\alpha|}  \Ee
for $|\xi|\ge R$  if $|\alpha|\le d+1$. See Remark \ref{rmk:g3g3} below.
 
\smallskip

\noindent {\bf(G4)} The implications between $H^s$ (or $\dot{H}^s$)--$L_x^q L_t^\infty$ estimates can readily be extended to $H^s$ (or $\dot{H}^s$)--$L_x^q L_t^r$ estimates under appropriate conditions on $s$, $q$, and $r$. The details are left to the interested reader.

\subsubsection*{Organization of the paper}
This paper is organized as follows: Section 2 is devoted to obtaining the optimal $L^p$--$L^q$ smoothing estimates for the wave operator with 
frequency localization, which serve as the key ingredients for the proof of Theorem \ref{thm:max}. In Section 3, we provide the proofs of Theorems \ref{thm:max} and \ref{cor:local-global}. Finally, we prove the equivalences of  various estimates, as stated in Propositions \ref{prop:xlocal to global}, \ref{prop:local to global}, and \ref{prop:local-global to global}.

\section{$L^p-L^q$ estimates for the wave operator}

In order to show Theorem \ref{thm:max}, we make use of $L^p$--$L^q$ type local smoothing estimate for the wave operator. 
In order to state our result,  we first introduce some notation. 

Let us set $\mathfrak Q$ be a point in the unit square $[0,1]^2$ given by
\[   \mathfrak Q=\Big( \frac{(d-1)(d+3)}{2(d^2+2d-1)} , \frac{(d-1)(d+1)}{2(d^2+2d-1)} \Big),\]
on which  the lines $L_1:(d-1)(1-x)=(d+1) y$ and $L_2: y=\frac{d+1}{d+3} x$ intersect each other.  We denote 
by $\mathcal T_u$  the closed triangle with vertices $(1,0), (1/2,1/2)$, and $ \mathfrak Q$. By $\mathcal T_l$
we also denote 
 the closed triangle with vertices $(1,0), (0,0)$, and $ \mathfrak Q$.  
 (See Figure \ref{fig2} 
  below.) 
 For $1\le p, q\le \infty$,  let us 
 set 
  \begin{align}
 \label{def-betau}
 \beta_u(p, q)&= \tfrac {d+1}{2}\big(\tfrac1p-\tfrac1{q}\big), 
 \\[4pt]
  \label{def-betal}
 \beta_l(p, q)&= \tfrac{d-1}2+\tfrac{1}{p}- \tfrac{d+1}q. 
 \end{align} 
We also define 
  \[
  \beta(p, q)=
 \begin{cases}  
\beta_u(p, q), & (\frac1p, \frac1q)\in  \mathcal T_u,
 \\[4pt]
\beta_l(p, q), & (\frac1p, \frac1q)\in  \mathcal T_l.
 \end{cases}
 \] 
Note that $\beta_u(p, q)=\beta_l(p, q)$ if $(\frac1p, \frac1q)\in \mathcal T_u\cap \mathcal T_l.$ So, $\beta(p, q)$ is well-defined.  

\begin{proposition} 
\label{prop:wave}
Let $I_\circ= (1,2)$  and $N\ge 1$. Let $(\frac1p, \frac1q)\in \mathcal T_u\cup \mathcal T_l\setminus{\{ \mathfrak Q\}}$. 
 Suppose that $\supp \widehat f$ is included in $\mathbb A_N=\{ \xi: N/2\le  |\xi|\le 2N\}$. Then,  
\Be
\label{smoothing}
\| e^{it\sqrt{-\Delta}} f\|_{L^{q}( \mathbb R^d\times I_\circ)}  \le CN^{\beta(p,q)} \|f\|_{L^p}.
\Ee
\end{proposition}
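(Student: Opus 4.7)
My plan is to establish four corner estimates on $\mathcal T_u \cup \mathcal T_l$ and interpolate between them. Three of the corners, $(1/2, 1/2)$, $(1, 0)$, $(0, 0)$, admit classical bounds: at $(1/2, 1/2)$, Plancherel gives the energy estimate $\|e^{it\sqrt{-\Delta}}f\|_{L^2(\mathbb R^d \times I_\circ)} \lesssim \|f\|_{L^2}$ with $\beta = 0$; at $(0, 0)$, the Miyachi--Peral bound for the frequency-localized wave propagator gives $\|e^{it\sqrt{-\Delta}}f\|_{L^\infty(\mathbb R^d \times I_\circ)} \lesssim N^{(d-1)/2}\|f\|_{L^\infty}$, matching $\beta_l = (d-1)/2$; at $(1, 0)$, stationary phase applied to the frequency-localized kernel $(e^{it|\xi|}\chi(\xi/N))^\vee$ yields $\|e^{it\sqrt{-\Delta}}f\|_{L^\infty(\mathbb R^d \times I_\circ)} \lesssim N^{(d+1)/2} \|f\|_{L^1}$, matching $\beta = (d+1)/2$.

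The fourth, critical corner $\mathfrak Q$ is where the bilinear restriction estimate for the cone (Theorem \ref{thm:bi}, due to Wolff \cite{W} and Tao \cite{T}) enters. After parabolic rescaling to unit frequency, for two pieces of the cone with angular separation $\sim 1$, the bilinear estimate reads $\|e^{it\sqrt{-\Delta}}f_1 \cdot e^{it\sqrt{-\Delta}}f_2\|_{L^{q_b}(\mathbb R^{d+1})} \lesssim \|f_1\|_{L^2}\|f_2\|_{L^2}$ for $q_b > (d+3)/(d+1)$. Following the scheme of Lee \cite{L}, I would combine this with a Whitney decomposition of the sphere near the diagonal into angularly separated cap pairs at multiple scales, together with a square function estimate for the cone, to convert the bilinear bound into a linear frequency-localized $L^p \to L^q$ smoothing estimate at $\mathfrak Q$. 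The coordinates of $\mathfrak Q$ arise from balancing the $N$-scaling of the bilinear estimate against the Whitney square-function aggregation; in particular, the slope $(d+1)/(d+3)$ of the line $L_2$ through the origin and $\mathfrak Q$ is exactly the reciprocal of the bilinear threshold exponent $(d+3)/(d+1)$.

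Once these four vertex bounds are in hand, Riesz--Thorin interpolation then yields \eqref{smoothing} throughout $\mathcal T_u \cup \mathcal T_l$: on $\mathcal T_u$, interpolation among $(1/2, 1/2), (1, 0), \mathfrak Q$ produces the unique affine function in $(1/p, 1/q)$ matching the three vertex values, namely $\beta_u$; on $\mathcal T_l$, the analogous interpolation among $(0, 0), (1, 0), \mathfrak Q$ produces $\beta_l$. Since $\beta_u$ and $\beta_l$ agree along the shared edge $L_1$ from $(1, 0)$ to $\mathfrak Q$, the piecewise definition of $\beta$ is unambiguous.

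The main obstacle is the bilinear-to-linear conversion at $\mathfrak Q$. The Wolff--Tao bilinear estimate admits only an $\e$-loss at the threshold $q_b = (d+3)/(d+1)$, so the linear bound at $\mathfrak Q$ itself would inherit this loss, which is precisely why $\mathfrak Q$ is excluded from the statement: at any other point of $\mathcal T_u \cup \mathcal T_l$, the $\e$-loss is absorbed by interpolating with a strictly subcritical bilinear estimate. Carefully tracking the $N$-dependence through the parabolic rescaling and carrying out the Whitney/square-function aggregation is the technical crux of the proof.
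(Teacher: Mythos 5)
Your overall strategy---prove bounds at three easy vertices plus a critical bound near $\mathfrak Q$ via Wolff--Tao bilinear restriction and a Whitney decomposition, then interpolate---matches the paper's scheme. However, there are two genuine gaps.

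First, your claim that ``the Wolff--Tao bilinear estimate admits only an $\e$-loss at the threshold $q_b = (d+3)/(d+1)$'' is incorrect. Wolff's result has an $\e$-loss, but Tao's endpoint theorem (reference \cite{T}) gives the sharp bilinear estimate exactly at $q = \frac{2(d+3)}{d+1}$ with no loss, and the paper uses this endpoint result. The reason $\mathfrak Q$ is excluded from the proposition is not the bilinear threshold at all; it is the Whitney summation. On the line $L_1$ (where $\beta_l = \beta_u$, which contains $\mathfrak Q$ and the whole segment to $(1,0)$) the geometric series $\sum_{\nu \le \nu_\circ} 2^{-4\nu(\beta_l - \beta_u)}$ has $\sim \log N$ comparable terms, so the direct bilinear-to-linear conversion yields a logarithmic loss everywhere on $L_1$, not just at $\mathfrak Q$.

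Second, and more seriously, your claim that ``the $\e$-loss is absorbed by interpolating with a strictly subcritical bilinear estimate'' does not hold. Interpolating a bound $N^{\beta(\mathfrak Q) + \e}$ (or $N^{\beta} \log N$) at $\mathfrak Q$ against a lossless bound at a vertex still produces $N^{\beta(p,q) + \theta\e}$ at the interpolated point with $\theta > 0$, which is not the sharp power; and letting $\e \to 0$ blows up the constants from the Whitney sum. More fundamentally, $\beta = \max(\beta_u, \beta_l)$ is \emph{convex} with a kink along $L_1$, so interpolating across $L_1$ between two lossless strong bounds overshoots the target power $\beta$ by a fixed amount. So Riesz--Thorin among the four corners cannot recover the sharp exponent on the segment $L_1 \setminus \{(1,0)\}$. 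The paper's crucial missing ingredient in your proposal is Bourgain's summation trick (Lemma \ref{B-Sum-Tri} in the paper), which converts two strong estimates at points on $L_2$ on opposite sides of $L_1$ (where the $\nu$-sum converges with opposite geometric rates) into a \emph{restricted weak-type} $L^{p,1}\to L^{q,\infty}$ bound at $\mathfrak Q$ with no loss. One then uses \emph{real} interpolation (not Riesz--Thorin) between this Lorentz-space bound and the strong bounds at $(1,0)$, $(0,0)$, $(1/2,1/2)$ to obtain the strong estimate on all of $\mathcal T_u \cup \mathcal T_l \setminus \{\mathfrak Q\}$, including $L_1$. Without Bourgain's trick, your plan cannot reach the sharp exponent on $L_1$.

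A minor point: what you call a ``square function estimate for the cone'' is, in the paper, the Tao--Vargas $L^p$ orthogonality lemma for boundedly overlapping rectangles (Lemma \ref{TVV-lemma}); it plays the role you describe and your usage of it is fine, but it is worth using the right statement since its exponent dependence ($q_\star = \min(q/2, (q/2)')$) is what constrains the region $\mathcal T_0$ where the aggregated bilinear estimate is valid.
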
 

It should be noted that the bounds in \eqref{smoothing} are no longer valid if the interval $I_\circ$ is replaced by $(0,1)$. 
This is why we use $I_\circ$ instead of the interval $(0,1)$.
Sharpness of the estimate \eqref{smoothing} can be shown by adapting the examples in \cite{TV2}.

The estimates \eqref{smoothing}  for the cases $(p, q)=(\infty,\infty), (1,\infty),$ and $(2, 2)$ are easy to show  (well-known). In fact, \eqref{smoothing}  for $(p, q)= (2, 2)$ follows from Plancherel's theorem. For the other cases, see Section \ref{001} below.  Consequently, the proof of Proposition \ref{prop:wave} basically  reduces to proving the estimate   
\Be 
\label{smoothing0}
\| e^{it\sqrt{-\Delta}} f\|_{L^{q,\infty}( \mathbb R^d\times I_\circ)}  \le CN^{\beta(p,q)} \|f\|_{L^{p, 1}}, \quad   (\frac1p, \frac1q)= \mathfrak Q.
\Ee 
Once we obtain 
\eqref{smoothing0}, 
by  real interpolation between this and the above mentioned estimates,  the estimate \eqref{smoothing}  follows for all $(\frac1p, \frac1q)\in \mathcal T_u\cup \mathcal T_l\setminus{\{ \mathfrak Q\}}$. 

 We show the estimate \eqref{smoothing0} by making use of the bilinear adjoint restriction estimate to the cone. The proof of \eqref{smoothing0}  is to be provided at the end of this section.    

\subsection{Bilinear estimates for the wave operator}
Let  $\mathcal T$ denote the closed triangle with vertices $(0,0),  \mathfrak P:=(\frac 12, \frac{d+1}{2(d+3)}),$ and $ (1,0)$. 
The following is a consequence of the bilinear adjoint restriction estimate to the cone, which is due to 
Wolff \cite{W} and Tao \cite{T}.

\begin{figure}[t]
\begin{centering}
\begin{tikzpicture}[scale=7]
    \draw[->] (0,0) -- (1.2,0) node[below] {\(\frac1p\)};
    \draw[->] (0,0) -- (0,0.7) node[left] {\(\frac1q\)};
    
    \fill[violet!50,opacity=0.6] (0,0) -- (1,0) -- (5/14,3/14) -- cycle; 
    \node[font=\small] at (0.4,0.07) {\(\mathcal{T}_l\)};
    
    \fill[LimeGreen!80,opacity=0.6] (1,0) -- (0.5,0.5) -- (5/14,3/14) -- cycle; 
    \node[font=\small] at (0.65,0.2) {\(\mathcal{T}_u\)};
    
    \draw[thick] (5/14,3/14) -- (1,0);
    
        \draw[densely dotted] (0.5,0.3) -- (5/14,3/14);
    
    \draw[densely dotted] (0,0) -- (0.7,0.7);
    \draw[thick] (5/14,3/14) -- (0.5,0.5);
    \draw[thick] (1,0) -- (0.5,0.5);

    \draw[red, thick] (0.5,0) -- (0.5,0.5);
    
    \filldraw[black] (0.5,0.3) circle (0.2pt) node[font=\footnotesize, right] {\( \mathfrak P\)};
    \filldraw[black] (5/14,3/14) circle (0.2pt) node[font=\footnotesize, below] {\( \mathfrak Q\)};
    
    \node[below left] at (0,0) {\((0,0)\)};
    \node[left] at (0,0.5) {\(\frac12\)};
    \node[below] at (0.5,0) {\(\frac12\)};
    \node[below] at (1,0) {\(1\)};
    \node[left] at (0,1/6) {\(\frac 1{q_\circ(d)}\)};
    
    \draw[thick] (5/14,3/14) -- (0,0) node[font=\footnotesize, pos=0.3, left] {\(\!L_2\,\,\)};
    \draw[draw=black] (0.25,0.25) -- (5/14,3/14) node[font=\footnotesize, pos=0.7,above] {\(L_1\)};
\end{tikzpicture}
\caption{The range of $(1/p, 1/q)$ for the estimate \eqref{smoothing}.} \label{fig2}
\end{centering}
\end{figure}
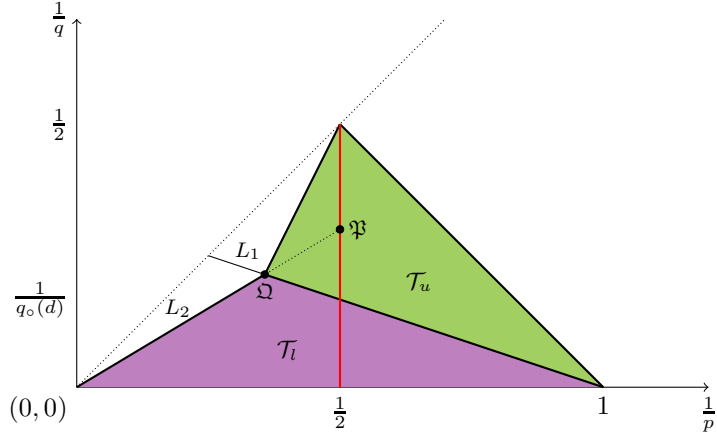

\begin{theorem}
\label{thm:bi}  Let $N\ge 1$ and  $1/\sqrt N \le \theta \le 1/10$.   Let $\Theta, \Theta'\subset \mathbb S^{d-1}$ be spherical caps of diameter $\theta$. Suppose  that 
\begin{align*}
 \supp \widehat f\subset \Lambda&:=\{ \xi:  \xi/|\xi|\in \Theta, \ N/2\le  |\xi|\le 2N \},   
  \\
   \supp \widehat g\subset \Lambda'&:=\{ \xi:  \xi/|\xi|\in \Theta', \ N/2\le  |\xi|\le 2N \}.
  \end{align*}
   Additionally, suppose $\dist(\Theta, \Theta')\ge  \theta$.  Then, for $p, q$ satisfying  $(\frac 1p, \frac1q)\in \mathcal T$, we have 
\Be 
\label{bilinear}
\| e^{it\sqrt{-\Delta}} f  e^{it\sqrt{-\Delta}} g \|_{L^{\frac q2}( \mathbb R^d\times  I_\circ)} \lesssim  
N^{2\beta_l(p,q)}
 \theta^{ 4(\beta_l(p,q)-\beta_u(p,q))} \|f\|_{L^p} \|g\|_{L^p}. 
\Ee
Here $\beta_l(p,q), \beta_u(p,q)$ are given by \eqref{def-betau} and \eqref{def-betal}. 
 Furthermore, if $\theta\sim 1/\sqrt N$, then \eqref{bilinear} holds without the assumption $\dist(\Theta, \Theta')\sim \theta$. 
\end{theorem}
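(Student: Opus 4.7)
The plan is to deduce Theorem~\ref{thm:bi} from the sharp bilinear adjoint restriction theorem for the light cone of Wolff~\cite{W} and Tao~\cite{T}, combined with elementary endpoint estimates and bilinear interpolation. The vertices of $\mathcal{T}$ are $(0,0)$, $(1,0)$, and $\mathfrak{P}=\bigl(\tfrac12,\tfrac{d+1}{2(d+3)}\bigr)$. The critical one is $\mathfrak{P}$, corresponding to $(p,q)=(2,\tfrac{2(d+3)}{d+1})$, which is exactly the sharp exponent at which the Wolff--Tao $L^2\!\times\!L^2\to L^{q/2}$ bilinear cone estimate holds for transverse caps; the remaining two vertices will be handled by elementary means.

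The core step is a parabolic-type rescaling that maps our angular sectors at scale $N$ to unit-size, $\mathrm{O}(1)$-separated pieces of a unit cone. Let $\omega_0$ be the center of $\Theta$ and split $\xi=\xi_1\omega_0+\xi_\perp$. Under the change of variables $\xi_1=N\eta_1$, $\xi_\perp=N\theta\,\eta_\perp$, the supports of $\widehat f,\widehat g$ become unit-scale, and the expansion
\[
|\xi|=\xi_1+\tfrac{|\xi_\perp|^2}{2\xi_1}+\mathrm{O}\!\Bigl(\tfrac{|\xi_\perp|^4}{\xi_1^3}\Bigr)
\]
allows us, after absorbing the linear part of the phase into a translation of the physical variables and rescaling time via $s=tN\theta^2$, to reduce the phase to the standard unit-scale cone phase $\eta_1 X_1+\eta_\perp\!\cdot\!X_\perp+s|\eta_\perp|^2/(2\eta_1)$ with controllable remainders. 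The resulting bilinear form becomes (up to Jacobian factors from the changes of variables in frequency, space, and time) a standard bilinear extension operator over two $\mathrm{O}(1)$-separated pieces of the unit cone, restricted to the time window $s\in(N\theta^2, 2N\theta^2)$.

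One then invokes three vertex estimates. At $\mathfrak{P}$, the Wolff--Tao bilinear cone restriction theorem, which is global in spacetime, supplies the unit-scale $L^2\!\times\!L^2\to L^{(d+3)/(d+1)}$ bound; tracking the Jacobians produces the factor $N^{2\beta_l(p,q)}\theta^{4(\beta_l-\beta_u)(p,q)}$ evaluated at $\mathfrak{P}$, namely $N^{(d-1)/(d+3)}\theta^{-4/(d+3)}$. At $(1,0)$, the standard frequency-localized dispersive estimate $\|e^{it\sqrt{-\Delta}}P_N\|_{L^1\to L^\infty}\lesssim N^{(d+1)/2}$ on $I_\circ$, applied in each factor, yields the bilinear $L^1\!\times\!L^1\to L^\infty$ bound with prefactor $N^{d+1}$, matching $2\beta_l(1,\infty)=d+1$. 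At $(0,0)$ the trivial $L^\infty\!\times\!L^\infty\to L^\infty$ bound $\lesssim 1$ suffices, since $1\le(N\theta^2)^{d-1}$ under the hypothesis $\theta\ge N^{-1/2}$, and this matches the stated form at that vertex. As both $\beta_l$ and $\beta_l-\beta_u$ are affine functions of $(1/p,1/q)$, bilinear complex interpolation distributes the estimate throughout the triangle $\mathcal{T}$.

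The main technical obstacle is the bookkeeping: each power of $N$ and $\theta$ in the conclusion must be extracted consistently from the frequency Jacobian $N^d\theta^{d-1}$, the physical-space Jacobian $N^{-d}\theta^{-(d-1)}$, and the time Jacobian $(N\theta^2)^{-1}$, and verified against the formulas for $\beta_u,\beta_l$ at each of the three vertices. A separate issue is the threshold regime $\theta\sim N^{-1/2}$, where the quadratic correction in the phase is itself $\mathrm{O}(1)$ and the cone is effectively flat after rescaling; here the transversality assumption $\dist(\Theta,\Theta')\ge\theta$ is no longer needed, since the bilinear estimate reduces to a Plancherel-type orthogonality bound on transverse boxes in frequency. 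This explains the final assertion of the theorem.
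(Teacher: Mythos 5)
Your overall strategy mirrors the paper's: reduce to the three vertices $(0,0)$, $(1,0)$, $\mathfrak P$ of $\mathcal T$, use Wolff--Tao at the critical vertex, and interpolate. But there are two genuine problems.

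\textbf{The vertex $(0,0)$.} You claim the $L^\infty\times L^\infty\to L^\infty$ bound is ``trivially $\lesssim 1$.'' That claim is false. The operator $e^{it\sqrt{-\Delta}}$ restricted to a frequency shell of radius $\sim N$ and a sector of angular width $\theta$ has $L^\infty\to L^\infty$ operator norm comparable to the $L^1$ norm of its kernel $K_t^N\ast\chi_\theta^\vee$, and for $t\in I_\circ$ one has $\|K_t^N\ast\chi_\theta^\vee\|_1 \sim (\theta N^{1/2})^{d-1}$, not $O(1)$. (Sanity check in $d=2$, $\theta\sim 1$: $\|K_t^N\|_\infty\sim N^{3/2}$, $\|K_t^N\|_2\sim N$, so $\|K_t^N\|_1\sim N^{1/2}$.) Consequently the needed bilinear bound $(N\theta^2)^{d-1}$ at this vertex is not automatic and requires a substantive argument: one first proves $\|K_t^N\ast\chi_\delta^\vee\|_1\lesssim 1$ for $\delta\le N^{-1/2}$ and then decomposes the sector of width $\theta$ into $(\theta N^{1/2})^{d-1}$ sub-sectors of width $N^{-1/2}$. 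This step cannot be skipped, and ``since $1\le(N\theta^2)^{d-1}$'' does not repair the issue: the point is that the asserted estimate $\lesssim 1$ does not hold, not that it would be stronger than needed.

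\textbf{The parabolic reduction at $\mathfrak P$.} You reduce to the unit-scale cone via the Taylor expansion $|\xi|=\xi_1+|\xi_\perp|^2/(2\xi_1)+O(|\xi_\perp|^4/\xi_1^3)$. After rescaling $\xi_\perp = N\theta\eta_\perp$, $\xi_1 = N\eta_1$ and $t\sim 1$, the remainder contributes a phase of size $O(N\theta^4)$, which is not small throughout the allowed range $N^{-1/2}\le\theta\le 1/10$ (already for $\theta\gg N^{-1/4}$ it is $\gg 1$). Calling this remainder ``controllable'' is where the actual work lies, and you give no mechanism for it. The paper instead passes to null coordinates $(x,t)\mapsto(x_1-t,x',x_1+t)$, so the phase becomes $(x,t)\cdot(\xi_1+|\xi|,\xi',|\xi|-\xi_1)$, and then sets $\rho=\xi_1+|\xi|$, $\eta=\xi'$; the identity $|\xi|-\xi_1=|\eta|^2/\rho$ (since $|\xi|^2-\xi_1^2=|\xi'|^2$) gives an \emph{exact} paraboloid-type surface. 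Parabolic rescaling $\eta\to\theta\eta$ is then exact as well, producing exactly the factor $\theta^{-4/(d+3)}$ with no error terms. Your Taylor-based argument would need a separate decomposition or a surface-perturbation version of Wolff--Tao to cover $N^{-1/4}\lesssim\theta\lesssim 1$, which you do not supply. (A minor additional point: for $\theta\sim N^{-1/2}$ the paper simply applies H\"older in $t$ plus single-operator $L^p\to L^q$ estimates; ``Plancherel-type orthogonality on transverse boxes'' is vaguer and not what is needed there.)

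The vertex $(1,0)$ via the dispersive estimate, and the identification of $N^{(d-1)/(d+3)}\theta^{-4/(d+3)}$ at $\mathfrak P$, are both correct.
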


When $\theta\sim 1$, $N\sim 1$, and $p=2$,  the estimate \eqref{bilinear} is known as the bilinear estimate for the cone. The estimate   \eqref{bilinear}  for $p=2$ and  $q>\frac{2(d+3)}{d+1}$  is due to  Wolff \cite{W} and the endpoint case for $p=2$ and $q=\frac{2(d+3)}{d+1}$ was obtained  by Tao \cite{T}.

In order to show Theorem \ref{thm:bi}, it is sufficient to verify 
\eqref{bilinear} for the cases $(p,q)=(1,\infty)$, $(p,q)=(\infty,\infty)$, and 
$(p,q)=(2,\frac{2(d+3)}{d+1})$.  For the last case we need to use the bilinear restriction estimates while the other two cases can be shown by kernel estimates, 
which are rather straightforward.  

 By rotation we may assume  that the caps $\Theta, \Theta'$ are contained in a $C\theta$ neighborhood of $e_1$. More precisely, we may assume 
 \[   \supp \widehat f,\ \supp \widehat g\subset \Lambda_0=\{ \xi:   \xi_1\in [N/2, 2N], \  |\xi'| \le  C \theta N    \},   \]
where $\xi=(\xi_1, \xi')\in\mathbb R\times \mathbb R^{d-1}$. Since $\dist(\Theta, \Theta')\ge  \theta$, by additional rotation in $\xi'$ and harmless dilation 
we may assume  that
\Be
\label{sep}
\begin{aligned}
  \supp \widehat f &\subset \{ \xi\in \Lambda_0:  |\xi'| \ge   2c \theta N    \},   
  \\
    \supp \widehat g&\subset \{ \xi\in \Lambda_0:  |\xi'| \le   c \theta N    \}
  \end{aligned}
  \Ee
for a constant $c>0$. 

\subsubsection{\bf Proof of \eqref{bilinear} for $(p,q)=(\infty, \infty)$ and $(p,q)=(1, \infty)$} \label{001}
Let $\beta_0\in C_c^\infty(-2^2, 2^2)$ such that  $\beta_0=1$ on $[-2, 2]$, and let $\beta \in C_c^{\infty}(2^{-2}, 2^2)$ such that $\beta=1$ on $[2^{-1}, 2]$. 

Let us set 
\[K_t^N(x)=(2 \pi)^{-d} \int  e^{i (x\cdot \xi+t|\xi|)} \beta(|\xi|/N) d\xi. \] 
Note that $K_t^N(x)=(2 \pi)^{-d} N^d \int  e^{iN (x\cdot \xi+t|\xi|)} \beta(|\xi|) d\xi$. 
Since the Hessian matrix of the function $\xi\to |\xi|$ has rank $d-1$ on the support of $\beta$, by the stationary phase method we see that $\|K_t^N\|_\infty\lesssim N^\frac{d+1}2$.   
Let us set 
\[  \chi_\theta (\xi)= \beta\Big(\frac{\xi_1}{N}\Big)\beta_0\Big(\frac {|\xi'|}{c\theta N}\Big) . \] 
Note that $e^{it\sqrt{-\Delta}} f=K_t^N\ast \chi^\vee_\theta\ast f$ and $\|K_t^N\ast \chi^\vee_\theta\|_{L^\infty} \le \|K_t^N\|_{L^\infty} \|\chi^\vee_\theta\|_{L^1} \lesssim  N^\frac{d+1}2$. Thus,  \eqref{bilinear} follows for $(p,q)=(1, \infty)$.

It is not difficult to see that  
$\|K_t^N\ast \chi^\vee_\theta\|_{L^1}\le C$ 
if $\theta\le N^{-1/2}$. Thus, dividing the support of 
$\chi_\theta$ into as many as $(\theta N^\frac12)^{d-1}$   sectors of  angular diameter $N^{-\frac12}$, we have $\|K_t^N\ast \chi^\vee_\theta\|_1\le C(\theta N^\frac12)^{d-1}$ for $t\in  I_\circ$. Thus, we get \eqref{bilinear} for $(p,q)=(\infty, \infty)$. 

\subsubsection{\bf Proof of \eqref{bilinear} for $(p,q)=(2,\frac{2(d+3)}{d+1})$}  Finally, to show 
\eqref{bilinear} for $(p,q)=(2,\frac{2(d+3)}{d+1})$, we use the bilinear endpoint restriction estimate to the cone due to Tao \cite{T}.  

 By  scaling  (i.e., $\widehat f  \to   \widehat f (N\cdot)$  and $\widehat g  \to   \widehat g (N\cdot)$), we may assume 
$N=1$. Consequently, it is enough to show 
\Be \label{reduced} \| e^{it\sqrt{-\Delta}} f  e^{it\sqrt{-\Delta}} g \|_{L^{r}( \mathbb R^d\times \mathbb R)} \lesssim    \theta^{d-1-\frac{d+1}r} \|f\|_{L^2} \|g\|_{L^2}\Ee
with $r=\frac {d+3}{d+1}$  while \eqref{sep}  holds. 

 Let us set
\[ \mathcal E h(x,t)= \iint e^{i(x_1\rho+ x'\cdot \eta+ t  |\eta|^2/\rho)}  h(\rho, \eta) d\eta d\rho. \]  
By changing variables $(x,t)\to (x_1-t, x', x_1+t)$,  the phase function $(x,t)\cdot (\xi, |\xi|)$ for $e^{it\sqrt{-\Delta}} f$ is transformed to $(x,t)\cdot(\xi_1+|\xi|, \xi',  |\xi|-\xi_1)$. By an additional change of variables $(\xi_1+|\xi|, \xi')\to (\rho, \eta)$,  we note that $|\xi|-\xi_1=|\eta|^2/\rho$, thus by the Plancherel's theorem and discarding harmless factors resulting from the change of variables,  the estimate \eqref{reduced} is now equivalent to the estimate 
\Be 
\label{theta} 
\| \mathcal E h_1  \mathcal E h_2 \|_{L^{r}( \mathbb R^d\times \mathbb R)} \lesssim    \theta^{d-1-\frac{d+1}r} \|h_1\|_{L^2} \|h_2\|_{L^2},
\Ee
while 
\begin{align*}
  \supp h_1 \subset  \mathfrak A_\theta&:= \{ (\eta, \rho) : \rho\sim 1,   |\eta| \ge   2c \theta \}, 
  \\
     \supp h_2\subset \mathfrak A_\theta'&:= \{ (\eta, \rho) : \rho\sim 1,  |\eta| \le   c \theta   \}
\end{align*} 
for a constant $c>0$. 
Note that \eqref{theta} with $\theta\sim 1$ is equivalent to the bilinear restriction estimate for the cone. 

Now, we set  
\[\tilde h_j(\rho, \eta)= \theta^{\frac{d-1}2}h_j(\rho, \theta \eta), \quad j=1,2.\]  
Then, by the change of variables $\eta\to \theta \eta$  we see 
\[ (\mathcal E h_1  \mathcal E h_2)(x,t)=  \theta^{(d-1)}(\mathcal E \tilde h_1  \mathcal E \tilde h_2)(x_1, \theta x',\theta^2 t).   \]
Since $\supp  \tilde h_1\subset \mathfrak A_1$ and $\supp  \tilde h_2\subset \mathfrak A_1'$, we may apply the estimate \eqref{theta} with $\theta=1$, which is in fact the bilinear restriction estimate to the cone. Therefore, the desired estimate 
 \eqref{theta} follows.

For the case $\theta\sim 1/\sqrt N$, we do not need the condition that $\dist(\Theta, \Theta')\ge  \theta$. 
To show \eqref{bilinear} for the case $\theta\sim 1/\sqrt N$,  by H\"older's inequality  it is enough to show 
\[
\| e^{it\sqrt{-\Delta}} f \|_{L^{q}( \mathbb R^d\times  I_\circ)} \lesssim  N^{\frac{d+1}2(\frac1p-\frac{1}q)}   \|f\|_{L^p}
\]
for $2\le p\le q\le \infty$. In fact, the estimate for $(p,q)=(2,2)$  follows from the Plancherel's theorem. 
The cases $(p,q)=(\infty, \infty),$ $(1,\infty)$ are clear from the previous computations in Section \ref{001}. Finally, interpolation gives all the  desired estimates.

\newcommand{\diam}{\operatorname{diam}}

\subsection{Proof of  \eqref{smoothing0}}
Before we prove the inequality \eqref{smoothing0}, firstly we recall a couple of lemmas. The following  is a multilinear generalization of a lemma known as Bourgain's summation trick \cite{B}.

\begin{lemma}[{\cite[Lemma 2.6]{L}}]
\label{B-Sum-Tri} Let $n\ge 1$. 
Let  $1\leq p_0^k,p_1^k\le \infty$, $k=1,\cdots, n,$ and  $1\leq q_0, q_1\le\infty$. Suppose that $\{T_j\}_{j=-\infty}^\infty$ is a sequence of $n$-linear (or sublinear) operators such that
\begin{equation*}
\|T_j(f^1,\cdots,f^n)\|_{L^{q_\ell}}\leq M_l 2^{j (-1)^\ell \varepsilon_\ell }\prod_{k=1}^n\|f^k\|_{L^{p_\ell^k}}, \quad \ell=0, 1
\end{equation*}
for some  $\varepsilon_0$, $\varepsilon_1>0$. 
Then, we have 
\begin{equation*}
\|\sum_jT_j(f^1,\cdots,f^n)\|_{L^{q,\infty}}\leq CM_1^{\theta}M_2^{1-\theta}\prod_{k=1}^{n}\|f^k\|_{L^{p^k,1}},
\end{equation*}
where $\theta=\varepsilon_1/(\varepsilon_0+\varepsilon_1)$, $1/q=\theta/q_0+(1-\theta)/q_1$, and $1/p^k=\theta/p_0^k+(1-\theta)/p_1^k$.
\end{lemma}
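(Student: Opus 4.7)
The plan is to reduce the lemma to a restricted weak-type inequality at the interpolated exponents (the case where every $f^k$ is a characteristic function) and then prove that inequality by an optimal dyadic splitting of $\sum_j T_j$. This is the multilinear extension of Bourgain's classical summation trick. Since $L^{p,1}$ admits an atomic characterization via characteristic functions and $L^{q,\infty}$ is a quasi-Banach lattice, a one-variable-at-a-time real-interpolation iteration (feeding the atomic decomposition of each $f^k$ in turn and using the quasi-triangle inequality in $L^{q,\infty}$) shows that it suffices to establish
\[
\big|\{x : |S(x)| > \lambda\}\big| \le C\,\lambda^{-q}\,(M_0^\theta M_1^{1-\theta})^q\prod_{k=1}^n |E_k|^{q/p^k}, \quad S := \sum_j T_j(\chi_{E_1},\ldots,\chi_{E_n}),
\]
for all finite-measure sets $E_k$ and all $\lambda>0$.

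For the splitting, fix an integer $j_0$ (to be chosen) and write $S = F_> + F_\le$ with $F_>:=\sum_{j>j_0} T_j(\chi_{E_1},\ldots,\chi_{E_n})$ and $F_\le:=\sum_{j\le j_0} T_j(\chi_{E_1},\ldots,\chi_{E_n})$. Applying the hypothesis with $\ell=0$ termwise to $F_>$ and summing the geometric series in $2^{-j\varepsilon_0}$ yields $\|F_>\|_{L^{q_0}}\le C X_0$ with $X_0 := M_0\,2^{-j_0\varepsilon_0}\prod_k|E_k|^{1/p_0^k}$; analogously $\|F_\le\|_{L^{q_1}}\le C X_1$ with $X_1 := M_1\,2^{j_0\varepsilon_1}\prod_k|E_k|^{1/p_1^k}$. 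Chebyshev then bounds the displayed level set above by $(2CX_0/\lambda)^{q_0}+(2CX_1/\lambda)^{q_1}$.

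The definition $\theta=\varepsilon_1/(\varepsilon_0+\varepsilon_1)$ is precisely what forces $\theta\varepsilon_0=(1-\theta)\varepsilon_1$, so $X_0^\theta X_1^{1-\theta}$ is independent of $j_0$ and equals $Y := M_0^\theta M_1^{1-\theta}\prod_k|E_k|^{1/p^k}$. Choose the integer $j_0$ approximately balancing $(X_0/\lambda)^{q_0}\sim(X_1/\lambda)^{q_1}$; writing $X_0=\lambda u^{1/q_0}$, $X_1=\lambda u^{1/q_1}$ at balance and using $1/q=\theta/q_0+(1-\theta)/q_1$ give $Y = X_0^\theta X_1^{1-\theta}=\lambda u^{1/q}$, i.e.\ $u=(Y/\lambda)^q$. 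Hence each Chebyshev term equals $\lambda^{-q}(M_0^\theta M_1^{1-\theta})^q\prod_k|E_k|^{q/p^k}$, and rounding $j_0$ to an integer costs only a bounded factor of $2^{\max(\varepsilon_0,\varepsilon_1)}$.

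The main obstacle is the reduction in the first paragraph rather than this arithmetic. Classical real interpolation is a single-variable machine, and feeding atomic decompositions of $f^1,\ldots,f^n$ into the multilinear $\sum_j T_j$ produces $n$-fold cross terms that do not trivially reassemble. The customary remedy is to iterate the single-variable Marcinkiewicz/Stein--Weiss argument one variable at a time, using that $L^{q,\infty}$ is a quasi-Banach lattice on which the relevant sums over characteristic-function atoms converge with uniform constants. Once this reduction is in hand, the dyadic-splitting argument above closes the proof.
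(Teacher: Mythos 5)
Your proposal is correct and follows the same Bourgain summation--trick argument that underlies \cite[Lemma 2.6]{L}: reduce to restricted weak type, split $\sum_j T_j$ at a scale $j_0$, estimate the two tails in $L^{q_0}$ and $L^{q_1}$ by summing geometric series, apply Chebyshev, and optimize $j_0$. One small slip to fix: with the sign convention $2^{j(-1)^\ell\varepsilon_\ell}$ of the hypothesis, $\ell=0$ gives the \emph{growing} factor $2^{j\varepsilon_0}$ and $\ell=1$ the \emph{decaying} factor $2^{-j\varepsilon_1}$, so it is the $\ell=1$ bound that should be summed over $j>j_0$ and the $\ell=0$ bound over $j\le j_0$ (your $X_0$ and $X_1$ should read $X_0=M_0 2^{j_0\varepsilon_0}\prod_k|E_k|^{1/p_0^k}$ and $X_1=M_1 2^{-j_0\varepsilon_1}\prod_k|E_k|^{1/p_1^k}$); after this swap the identity $\theta\varepsilon_0=(1-\theta)\varepsilon_1$ and the balancing step go through exactly as you wrote them.
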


We also recall the following useful lemma from \cite{TV2} to make use of orthogonality between the decomposed operators. 
\begin{lemma}[{\cite[Lemma 7.1]{TV2}}]\label{TVV-lemma}
Let $\{R_k\}$ be a collection of rectangles in frequency space such that the dilates $\{2R_k\}$ are essentially disjoint. Suppose that $\{F_k\}$ is a collection of functions whose Fourier transforms are supported in $R_k$. Then, for $1\leq p\leq \infty$, we have
\begin{equation*}
\biggl(\sum_k\|F_k\|_{L^p}^{\max(p,p')}\biggr)^{1/\max(p,p')}\lesssim \|\sum_kF_k\|_{L^p}\lesssim \biggl(\sum_k\|F_k\|_{L^p}^{\min(p,p')}\biggr)^{1/\min(p,p')}.
\end{equation*}
\end{lemma}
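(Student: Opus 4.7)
The plan is to establish both inequalities by interpolating between the three canonical values $p=1$, $p=2$, and $p=\infty$, using the Fourier support condition to produce orthogonality at $p=2$ and to recover individual $F_k$ from the sum $\sum_j F_j$ at $p=1,\infty$.

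For the right-hand (Bessel-type) bound, the endpoints are routine: the triangle inequality handles $p=1$ and $p=\infty$ (where $\min(p,p')=1$), while at $p=2$ the essentially disjoint Fourier supports $\{R_k\}$ combined with Plancherel give $\|\sum_k F_k\|_{L^2}^2 \lesssim \sum_k \|F_k\|_{L^2}^2$. Vector-valued Riesz--Thorin interpolation, applied to the linear operator $\{F_k\}\mapsto \sum_k F_k$ acting from $\ell^{r}(L^p)$ to $L^p$ with $r$ varying jointly with $p$, then yields the full range $1\le p\le \infty$ with exponent $\min(p,p')$ on the sequence side.

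For the left-hand (Riesz-type) bound, the key step is an extraction argument. Using that the dilates $2R_k$ are essentially disjoint, I would fix smooth cutoffs $\phi_k$ with $\widehat{\phi_k}$ supported in a fixed enlargement of $2R_k$, identically one on $R_k$, and normalized so that $\|\check\phi_k\|_{L^1}\lesssim 1$ uniformly in $k$. Because the Fourier supports of all other $F_j$ miss $\supp \widehat{\phi_k}$, we obtain $F_k = \check\phi_k \ast \sum_j F_j$, and Young's inequality gives $\|F_k\|_{L^p} \lesssim \|\sum_j F_j\|_{L^p}$ for every $1\le p\le\infty$; at $p=2$ Plancherel upgrades this to the $\ell^2$ bound $(\sum_k \|F_k\|_{L^2}^2)^{1/2}\lesssim \|\sum_j F_j\|_{L^2}$. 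Interpolating between these endpoints (or, equivalently, dualizing the right-hand inequality already established for $p'$) produces the claimed bound with exponent $\max(p,p')$.

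The main technical obstacle is securing the uniform bound $\|\check\phi_k\|_{L^1}\lesssim 1$ when the rectangles $R_k$ have widely varying sizes and orientations. This is handled by taking $\phi_k$ to be the pullback of a single fixed Schwartz bump under the affine map sending the unit cube to a slight enlargement of $R_k$; the $L^1$ norm of the Fourier transform is affine-invariant, so the bound is automatic. The other, milder, subtlety is checking that the interpolation respects the mixed scalar/sequence-space structure, which follows from the standard Banach-valued Riesz--Thorin theorem.
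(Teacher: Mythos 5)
The paper does not prove this lemma; it is quoted verbatim from Tao--Vargas (\cite[Lemma~7.1]{TV2}), so there is no in-paper argument to compare against. Your proof is correct and is essentially the standard one: at $p=1,\infty$ use the triangle inequality (right side) and the uniform reproducing bound $F_k=\check\phi_k\ast\sum_jF_j$ with $\|\check\phi_k\|_{L^1}\lesssim1$ (left side), at $p=2$ use Plancherel plus the bounded overlap of the $2R_k$, and then interpolate in the vector-valued Riesz--Thorin sense (or, as you note, deduce the left inequality from the right one at $p'$ by duality after inserting the cutoffs). The one point worth stating more carefully is that the operator fed into interpolation should be $\{g_k\}\mapsto\sum_k\check\phi_k\ast g_k$ (and, for the left inequality, $F\mapsto\{\check\phi_k\ast F\}$) rather than the map $\{F_k\}\mapsto\sum_kF_k$ restricted to Fourier-constrained sequences, since Riesz--Thorin requires a genuine bounded operator on the full space; your later discussion of the affine-pullback cutoffs $\phi_k$ and the affine invariance of $\|\check\phi_k\|_{L^1}$ makes clear you have this in hand, so this is only a matter of phrasing, not a gap.
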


By finite decomposition and rotation,  we may assume  
\[ \supp \widehat f\subset  \mathbb A_N^0:= \{ \xi:  \xi/|\xi|\in \Theta_0, \ N/2\le  |\xi|\le 2N \},\]
where $\Theta_0$ is a small spherical cap around  $e_1$. 
We decompose  $\Theta_0\times {\Theta_0}$ using a Whitney type decomposition of    $\Theta_0\times {\Theta_0}$ away from its diagonal. 

\subsubsection*{A Whitney type decomposition.}  Following the typical dyadic decomposition process, for each $\nu\ge 0$, we partition
$\Theta_0$ into spherical caps $\Theta_k^\nu$  such that  
\[c_d  2^{-\nu}\le 
  \diam (\Theta_k^\nu)  \le C_d2^{-\nu}\] for some constants $c_d$, $C_d>0$
and 
$\Theta_k^\nu \subset \Theta_{k'}^{\nu'}$ for some $k'$ whenever $\nu\ge \nu'$. Let  $\nu_\circ=\nu_\circ(N)$ denote 
 the integer  $\nu_\circ$ such that 
\[  N^{-1} <   2^{-2\nu_\circ} \le  4N^{-1} .  
\]
Thus, $\Theta_0=\cup_{k}\Theta_k^\nu$ for each $\nu$.  Consequently,  we may write 
\Be
\label{bi-decomp} \Theta_0\times \Theta_0=\bigcup_{ \nu:  2^{-\nu_\circ} \le    2^{-\nu} \lesssim 1 }\,\, \bigcup_{k\sim_\nu k'} \Theta_k^\nu\times \Theta_{k'}^{\nu},\Ee
where  $k\sim_\nu k'$ means $\dist (\Theta_k^{\nu}, \Theta_{k'}^{\nu})\sim 2^{-\nu}$ if  $\nu< \nu_\circ$ and  $\dist (\Theta_k^{\nu}, \Theta_{k'}^{\nu})\lesssim 2^{-\nu}$ if  $\nu= \nu_\circ $ 
(e.g., see \cite[p.971]{TVV}). When  $k\sim_{\nu_\circ} k'$, the sets $\Theta_k^{\nu_\circ}$ and $\Theta_{k'}^{\nu_\circ}$ are not necessarily separated from each other by 
distance $\sim 2^{-\nu_\circ}$ since  the decomposition 
process terminates  at $\nu=\nu_\circ$.

Instead of \eqref{smoothing0}, we consider the bilinear estimate for $(f,g)\mapsto e^{it\sqrt{-\Delta}} f  e^{it\sqrt{-\Delta}} g$ from 
$L^p\times L^p$ to $L^{q/2}$, which clearly implies \eqref{smoothing0}. 
We assume that 
\[   \supp \widehat f, \quad  \supp \widehat g\, \subset  \mathbb A_N^0.\] 
Let us define $f_k^\nu$ and $g_{k'}^\nu$ by
\[  \mathcal F (f_k^\nu)( \xi)=    \widehat f(\xi) \chi_{\Theta_k^\nu} \Big(\frac{\xi}{|\xi|}\Big), \quad    \mathcal F (g_{k'}^\nu)( \xi)=    \widehat g(\xi) \chi_{\Theta_{k'}^\nu} \Big(\frac{\xi}{|\xi|}\Big)     .\] 
Since, for each $\nu$, $\Theta_0=\cup_{k}\Theta_k^\nu$ and the sets  
$\Theta_k^\nu$ are essentially disjoint,  $\sum_{k}  \chi_{\Theta_k^\nu}=1$ almost everywhere.  Thus, it is clear that 
\[ \widehat f=\sum_k  \mathcal F (f_k^\nu), \quad \widehat  g=\sum_k  \mathcal F (g_k^\nu).\]
Combining this and  \eqref{bi-decomp}, we may write 
\Be 
\label{bi-decomp2} \widehat f(\xi) \widehat g(\eta)=  \sum_{\nu_\circ\ge \nu}  \sum_{k\sim_\nu k'}   \mathcal F (f_k^\nu)(\xi)\,\mathcal F (g_{k'}^\nu)(\eta).\Ee

Now, for each $\nu$,  define a bilinear operator 
\[  B^\nu (f,g) (x,t)=   \sum_{k\sim_\nu k'} e^{it\sqrt{-\Delta}}  f_k^\nu(x)\, e^{it\sqrt{-\Delta}} g_{k'}^\nu(x).   \] 
From \eqref{bi-decomp2}, it follows that 
\[   e^{it\sqrt{-\Delta}} f  e^{it\sqrt{-\Delta}} g =\sum_{\nu_\circ\ge \nu}    B^\nu (f,g).\]

Using orthogonality (Lemma \ref{TVV-lemma})  and Theorem \ref{thm:bi},  for  $\nu\le \nu_0$, we obtain 
\[   \| B^\nu (f,g)\|_{L^{\frac q2}( \mathbb R^d\times  I_\circ)}\lesssim    N^{2\beta_l(p,q)}  2^{ -4\nu(\beta_l(p,q)-\beta_u(p,q))} \|f\|_{L^p} \|g\|_{L^p} \] 
provided that  $(1/p, 1/q)\in  \mathcal T_0:=\mathcal T\cap\{(a,b): 1/2\le a+b, \, 1/4\le a\le 1/2\}$. 
Indeed, using Lemma \ref{TVV-lemma}, we have 
\Be
\label{haha}   \| B^\nu (f,g)\|_{L^{\frac q2}( \mathbb R^d\times  I_\circ)} 
\lesssim  \Big( \sum_{k\sim_\nu k'} \|e^{it\sqrt{-\Delta}}  f_k^\nu e^{it\sqrt{-\Delta}} g_{k'}^\nu  \|_{L^{\frac q2}( \mathbb R^d\times  I_\circ)}^{q_\star} \Big)^\frac{1}{q_\star} 
\Ee
where $q_\star=\min(q/2, (q/2)')$. To show this, note that, for each fixed $t$,   the Fourier transforms of  $\{ e^{it\sqrt{-\Delta}}  f_k^\nu e^{it\sqrt{-\Delta}} g_{k'}^\nu\}_{k\sim_\nu k'} $ 
are supported in boundedly overlapping rectangle of dimension about $N\times \overset{(d-1)-\text{times}}{ N2^{-\nu}\times\dots \times N2^{-\nu}}$. Hence, 
Lemma \ref{TVV-lemma} (the second inequality) gives
\[
\| B^\nu (f,g)(\cdot, t)\|_{L^{\frac q2}( \mathbb R^d)} 
\lesssim  \Big( \sum_{k\sim_\nu k'} \|e^{it\sqrt{-\Delta}}  f_k^\nu e^{it\sqrt{-\Delta}} g_{k'}^\nu  \|_{L^{\frac q2}( \mathbb R^d)}^{q_\star} \Big)^\frac{1}{q_\star} .\]
Since $q/2 \ge q_\star$, raising power to $q/2$ and taking integration over $I_\circ$ give \eqref{haha} via Minkowski's inequality. 
 By combining \eqref{haha} and  Theorem \ref{thm:bi}  and   Cauchy-Schwarz inequality,  we see
\begin{align*}
\|B^\nu (f,&g)\|_{L^{\frac q2}( \mathbb R^d\times  I_\circ)}\\
 &\lesssim  N^{2\beta_l(p,q)}  2^{ -4\nu(\beta_l(p,q)-\beta_u(p,q))}   \Big( \sum_{k\sim_\nu k'}  \|f_k^\nu\|_{L^p}^{q_\star} \|g_{k'}^\nu\|_{L^p}^{q_\star} \Big)^\frac{1}{q_\star}  
\\ 
&\le   N^{2\beta_l(p,q)}  2^{ -4\nu(\beta_l(p,q)-\beta_u(p,q))}  \Big( \sum_{k}  \|f_k^\nu\|_p^{2q_\star}\Big)^\frac{1}{2q_\star}
 \Big( \sum_{k'}   \|g_{k'}^\nu\|_{L^p}^{2q_\star}\Big)^\frac{1}{2q_\star}. 
\end{align*}

Note that  $2q_\star\ge p\ge 2$ whenever $(1/p, 1/q)\in \mathcal T_0$.  
 Therefore,  since $f=\sum_k f_k^\nu$ and $g=\sum_k g_k^\nu$ for each $\nu$, Lemma \ref{TVV-lemma} (the first inequality) gives
$\sum_{k}  \|f_k^\nu\|_{L^p}^{2q_\star}\lesssim \|f\|_{L^p}^p$ and $ \sum_{k'}   \|g_{k'}^\nu\|_{L^p}^{2q_\star}\lesssim \|g\|_{L^p}^p$ for $(1/p, 1/q)\in \mathcal T_0$. 
Consequently, we obtain the  estimate
\Be
\label{the-estimate} \| B^\nu (f,g)\|_{L^{\frac q2}( \mathbb R^d\times  I_\circ)}\lesssim N^{2\beta_l(p,q)}  2^{ -4\nu(\beta_l(p,q)-\beta_u(p,q))} \|f\|_{L^p} \|g\|_{L^p}
\Ee
 for $(1/p, 1/q)\in \mathcal T_0$.

Note that 
\[4(\beta_l(p,q)-\beta_u(p,q))= 2 \big((d-1)(1-\tfrac1p)-\tfrac{d+1}q\big).\]
   Recall that the lines $L_1: (d-1)(1-x)=(d+1) y$ and $L_2: y=\frac{d+1}{d+3} x$ intersect at $(1/{p}_\ast,1/{q}_\ast):= \mathfrak Q$. Also note that $L_2$ contains the open line segment $((0,0),  \mathfrak P)\cap \mathcal T_0$, which 
contains  the point $ \mathfrak Q$. (See Figure  \ref{fig2}.) We choose $(1/p_0, 1/q_0)$, $(1/p_1, 1/q_1)\in [(0,0), \mathfrak P]\cap \mathcal T_0$  such that 
\[\beta_l(p_0,q_0)-\beta_u(p_0,q_0)> 0> \beta_l(p_1,q_1)-\beta_u(p_1,q_1).\] 
Consequently, we have the estimate \eqref{the-estimate} for $(p,q)=(p_0,q_0)$ and $(p,q)=(p_1,q_1)$. 
Note that  $\beta_l(p_\ast,q_\ast)=\beta_u(p_\ast,q_\ast)$.   We apply  Lemma \ref{B-Sum-Tri} with $n=2$  and those two estimates to obtain 
 the restricted weak type bound
\[ \|e^{it\sqrt{-\Delta}} f  e^{it\sqrt{-\Delta}} g\|_{L^{{q}_\ast/2,\infty}(\mathbb R^d\times I_\circ)}\lesssim   N^{2\beta_l(p_\ast,q_\ast)} \|f\|_{L^{{p}_\ast, 1}} \|g\|_{L^{{p}_\ast, 1}}, \]
which obviously gives the desired estimate for \eqref{smoothing0}.  Hence, this completes the proof of \eqref{smoothing0}.

\section{Proof of Theorem \ref{thm:max}}
 In this section we prove Theorem \ref{thm:max}. 
 Once we have Proposition \ref{prop:wave}, the proof of Theorem \ref{thm:max} is rather standard.  
 
In order to show \eqref{max}, by time translation and Plancherel's theorem we  may replace the interval $(0,1)$ with $I_\circ=(1,2)$. 
More precisely, \eqref{max} is equivalent with 
\begin{equation}\label{max1}
	\big\| \sup_{t \in I_\circ} \big|e^{it\sqrt{-\Delta}}f\big|\big\|_{L^{q}(\mathbb{R}^d)} \le C_{d,q,s} \|f\|_{{H}^s(\mathbb{R}^d)}. 
\end{equation}
We prove \eqref{max1} by considering the cases $q\in (2, q_\circ(d))$ and $q\in (q_\circ(d), \infty)$, separately.

We begin with  the maximal estimate 
\begin{equation}\label{max-frequency}
\| \sup_{t\in I_\circ}  |e^{it\sqrt{-\Delta}} f|\|_{L^{q}( \mathbb R^d)}  \le CN^{\beta(p,q)+\frac1q} \|f\|_{L^p},
\end{equation}
for $(\frac1p, \frac1q)\in \mathcal T_u\cup \mathcal T_l\setminus{\{ \mathfrak Q\}}$ provided that $\supp \widehat f \subset \mathbb A_N$.  Although the estimate \eqref{max-frequency} follows from Proposition \ref{prop:wave} and a standard argument, we provide some details for the reader's convenience. 

 From \cite[Lemma 1.3]{L} we recall that 
\[
\big\| \sup_{t\in I_\circ}|F(x,t)| \big\|_{L^q(\mathbb{R}^d)} 
\leq C \Big( \|F\|_{L^q(\mathbb{R}^d \times I_\circ)} 
+ \|F\|_{L^q(\mathbb{R}^d \times I_\circ)}^{\frac{q-1}{q}} 
\|\partial_t F\|_{L^q(\mathbb{R}^d \times I_\circ)}^{\frac{1}{q}} \Big), 
\]
which is an easy consequence of the fundamental theorem of calculus and H\"older's inequality.  By Young's  inequality, 
we have 
\[ \|F\|_{L^q(\mathbb{R}^d \times I_\circ)}^{\frac{q-1}{q}} 
\|\partial_t F\|_{L^q(\mathbb{R}^d \times I_\circ)}^{\frac{1}{q}} \le C \Big( N^\frac1q \|F\|_{L^q(\mathbb{R}^d \times I_\circ)}  +  N^\frac{1-q}q \|\partial_t F\|_{L^q(\mathbb{R}^d \times I_\circ)} \Big). \] 
Thus, applying the above inequality with $F(x,t) =e^{it\sqrt{-\Delta}} f(x)$, we have  the left-hand side of \eqref{max-frequency} bounded by a constant multiple of  
\[ 
 N^\frac1q \|e^{it\sqrt{-\Delta}}f\|_{L^q(\mathbb{R}^d \times I_\circ)}  +  N^\frac{1-q}q \| e^{it\sqrt{-\Delta}} (\sqrt{-\Delta} f)\|_{L^q(\mathbb{R}^d \times I_\circ)}.   \]
 Since $\supp \widehat f \subset \mathbb A_N$,  Mikhlin's multiplier theorem gives $\|\sqrt{-\Delta} f\|_p\le C N\|f\|_p$. Therefore,   we obtain  \eqref{max-frequency} by Proposition \ref{prop:wave}. 

\subsection{The case $q\in (2, q_\circ(d))$}  We first show \eqref{max} for the case $q\in (2, q_\circ(d))$. Indeed, fixing  $q_\ast \in (2, q_\circ(d))$, 
 we show 
 \begin{equation*}
 \label{max00} \Big \| \sup_{t\in I_\circ} |e^{it\sqrt{-\Delta}} f|\Big\|_{L^{q_\ast}(\mathbb R^d)} \le C\|f\|_{H^{s_c(q_\ast)}},  
\end{equation*}
where 
\[ s_\ast=s_c(q_\ast, d)=(d+1)/4-(d-1)/2q_\ast.\]

Equivalently, we need to show  
\Be 
\label{max-ha}
\Big\| \sup_{t\in I_\circ} |U_t^{s_\ast}  f| \Big \|_{L^{q_\ast}} \le C\|f\|_{L^2},
\Ee
where 
\Be 
\label{def-u}
U_t^{s}  f:=  e^{it\sqrt{-\Delta}} (1-\Delta)^{-\frac s2} f=  (2\pi)^{-d} \int_{\mathbb R^n} e^{i(x\cdot \xi + t|\xi|)} \frac{\widehat f(\xi) d\xi}{ (1+|\xi|^2)^{s/2}} .
\Ee
 
 Let $P_j$ be the standard Littlewood-Paley projection operator defined by 
\[ \widehat{P_j f}(\xi)=\beta(|\xi|/2^j)  \widehat f(\xi), \quad j\ge 1,   \]  
where $\beta\in C_c^{\infty}(1/2, 2)$ such that $\sum_{j=-\infty}^\infty \beta(2^{-j} t)=1$ for $t>0$.  
We also set $P_0 =1- \sum_{j\ge 1} P_j f$.

\begin{lemma} 
\label{short} Let $s\in \mathbb R$, $q\ge 2$,  and $J$ be a unit interval. Then, 
\Be\label{low}
 \| \sup_{t\in J} |U_t^{s} P_0  f| \|_{L^{q}(\mathbb R^d)} \le C\|f\|_{L^2}. 
\Ee
\end{lemma}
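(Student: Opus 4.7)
The plan is to exploit the fact that the low-frequency cutoff $P_0$ restricts the Fourier support of $U_t^s P_0 f$ (and of its time derivative) to a fixed compact set, which trivialises the maximal estimate via Bernstein's inequality combined with the time-derivative trick already used in this section.

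Let $\chi_0$ denote the smooth, compactly supported Fourier multiplier of $P_0$, so that $\chi_0$ is supported in $\{|\xi|\le 2\}$ and $\widehat{P_0 f}=\chi_0\widehat f$. By Plancherel's theorem, the symbol $e^{it|\xi|}\chi_0(\xi)(1+|\xi|^2)^{-s/2}$ is bounded uniformly in $t\in\mathbb R$, so $\|U_t^s P_0 f\|_{L^2}\le C\|f\|_{L^2}$. Since the Fourier transform of $U_t^s P_0 f$ is supported in the fixed set $\supp\chi_0$, Bernstein's inequality upgrades this to
\[
\|U_t^s P_0 f\|_{L^q(\mathbb R^d)}\le C\|f\|_{L^2(\mathbb R^d)}\qquad\text{for every }q\ge 2,
\]
uniformly in $t$. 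The identical reasoning applies to
\[
\partial_t U_t^s P_0 f(x)=(2\pi)^{-d}\int e^{i(x\cdot\xi+t|\xi|)}\frac{i|\xi|\chi_0(\xi)}{(1+|\xi|^2)^{s/2}}\widehat f(\xi)\,d\xi,
\]
whose Fourier multiplier is again bounded uniformly in $t$ (as $|\xi|\chi_0$ is bounded and compactly supported) and whose Fourier support is still contained in $\supp\chi_0$, and therefore yields $\|\partial_t U_t^s P_0 f\|_{L^q(\mathbb R^d)}\le C\|f\|_{L^2(\mathbb R^d)}$.

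To conclude, I would invoke the elementary inequality from \cite[Lemma 1.3]{L} already recalled in this section,
\[
\big\|\sup_{t\in J}|F(\cdot,t)|\big\|_{L^q(\mathbb R^d)}\le C\Big(\|F\|_{L^q(\mathbb R^d\times J)}+\|F\|_{L^q(\mathbb R^d\times J)}^{(q-1)/q}\|\partial_t F\|_{L^q(\mathbb R^d\times J)}^{1/q}\Big),
\]
which holds for any unit interval $J$ after a harmless time translation of the statement given there for $I_\circ$. Since $|J|\le 1$, the uniform fixed-time bounds above directly convert into the corresponding $L^q(\mathbb R^d\times J)$ bounds on $F=U_t^s P_0 f$ and $\partial_t F$, and \eqref{low} follows. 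There is no essential obstacle here; the low-frequency cutoff makes the estimate nearly trivial because all relevant Fourier multipliers are bounded functions of fixed compact support, and the dependence on $s$ drops out entirely.
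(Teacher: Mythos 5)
Your argument is correct and rests on the same mechanism as the paper's: the compact Fourier support of $P_0 f$ gives uniform-in-$t$ fixed-time bounds (including on $\partial_t U_t^s P_0 f$), and a fundamental-theorem-of-calculus argument then controls the supremum in $t$. The only cosmetic differences are that you upgrade the fixed-time $L^2$ bound to $L^q$ via Bernstein and then invoke the packaged inequality from \cite[Lemma~1.3]{L}, whereas the paper treats $q=\infty$ and $q=2$ separately (with the FTC step done by hand at $q=2$) and interpolates.
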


\begin{proof} Recall \eqref{def-u}. By time translation and Plancherel's theorem, we may assume that $J=(0,1)$.  By the Cauchy--Schwarz inequality and 
Plancherel's theorem,  we have $\|U_t^{s} P_0  f\|_\infty \lesssim \|\widehat f\|_2=(2\pi)^{d/2}\|f\|_2$ for any $s\in \mathbb R$. Thus, it 
suffices to show \eqref{low} for $q=2$. Since $U_t^{s} P_0  f=\int_0^t \partial_\tau U_\tau^{s} P_0  f d\tau+ U_0^{s} P_0  f$, 
\[   \sup_{t\in (0,1)} |U_t^{s} P_0  f(x)| \le    \|  \partial_t U_t^{s} P_0  f(x)\|_{L_{t}^2(0,1)}+    |U_0^{s} P_0  f(x)|.\]
Note that  $|\partial_t U_t^{s} P_0  f(x)|=| U_t^{s} (-\Delta)^\frac12  P_0  f(x)|$. Thus, taking $L_x^2$ norm on both sides of the above inequality, we get \eqref{low} for $q=2$ since 
$\|\partial_t U_t^{s} P_0  f\|_{L_x^2}\lesssim \|f\|_2$ for any $t, s\in \mathbb R$ as can be easily seen by Plancherel's theorem. 
\end{proof}

Thanks to the estimate  \eqref{low}, it is sufficient to consider $\sum_{j\ge 1}U_t^{s_\ast} P_j f$. Now, from the estimate \eqref{max-frequency}  we have
\Be \label{basic}  \Big\|  \sup_{t\in I_\circ} |U_t^{s_\ast}  P_j f | \Big \|_{L^{q}( \mathbb R^d)}  \le C2^{(\frac {d+1}{2p}-\frac{d-1}{2q}-s_\ast)j} \|f\|_{L^p}\Ee
provided that $(1/p, 1/q)$ is contained in $\mathcal T_u\setminus \{ \mathfrak Q\} $ (see Figure \ref{fig2}).  Using these estimates, as before, we can show 
\Be 
\label{res-weak}
 \Big \|  \sum_{j\ge 1} \sup_{t\in I_\circ} |U_t^{s_\ast}  P_j f | \Big\|_{L^{q, \infty}( \mathbb R^d)}  \le C
\|f\|_{L^{p,1}}\Ee
whenever $(1/p, 1/q)\in  \operatorname{ int } \mathcal T_u$ and $ \frac {d+1}{2p}-\frac{d-1}{2q}=s_\ast.$  
Indeed,  we choose pairs $(p_\ell, q_\ell)$, $\ell=0,1$,  such that  $(1/p_\ell, 1/q_\ell)\in  \operatorname{ int } \mathcal T_u$ and 
\[ \frac {d+1}{2p_0}-\frac{d-1}{2q_0}-s_\ast<0< \frac {d+1}{2p_1}-\frac{d-1}{2q_1}-s_\ast.\]
Consequently, from \eqref{basic} we have two estimates for $(p_\ell, q_\ell)$, $\ell=0,1$. 
Applying  Lemma \ref{B-Sum-Tri} with $n=1$ to those two estimates gives \eqref{res-weak}.

Real interpolation among the consequent estimates \eqref{res-weak} upgrades those estimates  to the strong bounds
\[ \Big \|  \sum_{j\ge 1} \sup_{t\in I_\circ} |U_t^{s_\ast}  P_j f | \Big\|_{L^{q}( \mathbb R^d)}  \le C 
\|f\|_{L^p},\]
provided that $(1/p, 1/q)\in  \operatorname{ int } \mathcal T_u$ and $ \frac {d+1}{2p}-\frac{d-1}{2q}=s_\ast.$ 
Note that $(1/2, q_\ast)\in \operatorname{ int } \mathcal T_u$.  Therefore,  taking $p=2$ and $q=q_\ast$ 
gives the desired \eqref{max-ha}.

\subsection{The case for $q\in  (q_\circ(d), \infty)$} This case can be similarly handled as before. We let $q_\ast\in (q_\circ(d), \infty)$ and 
\[s_\ast= s_c(q_\ast, d)=d/2-d/q_\ast.\]   Then, the estimate \eqref{max-frequency} gives
\Be 
\label{basic2} \Big \|  \sup_{t\in I_\circ} |U_t^{s_\ast}  P_j f | \Big \|_{L^{q}( \mathbb R^d)}  \le C2^{(\frac{d-1}2+  \frac {1}{p}-\frac{d}{q}-s_\ast)j} \|f\|_{L^p}
\Ee
provided that $(1/p, 1/q)$ is contained in $\mathcal T_u\setminus \{ \mathfrak Q\} $ 
(see Figure \ref{fig2}
). 
Choosing suitable pairs $(p_\ell, q_\ell)$, $\ell=0,1$,  by \eqref{basic2} and Lemma \ref{B-Sum-Tri} with $n=1$,  
we have  
\Be
\label{sss} \Big \|  \sum_{j\ge 1} \sup_{t\in I_\circ} |U_t^{s_\ast}  P_j f | \Big\|_{L^{q, \infty}( \mathbb R^d)}  \le C
\|f\|_{L^{p,1}}\Ee
provided that $(1/p, 1/q)\in  \operatorname{ int } \mathcal T_l$ and $ \frac{d-1}2+  \frac {1}{p}-\frac{d}{q}=s_\ast.$ Real interpolation between those estimates  and   taking $p=2$ and $q=q_\ast$  
give the desired \eqref{max-ha}.    

\begin{rmk} 
\label{lolo} 
By real interpolation, we have a slightly stronger estimate
\Be 
\label{L2-h}
\Big\| \sup_{t \in (0,1)} \big|e^{it\sqrt{-\Delta}}f\big|\Big\|_{L^{q,2}(\mathbb{R}^d)} \le C_{d,q,s} \|f\|_{{H}^{s_c(q,d)}(\mathbb{R}^d)} 
\Ee
for $q\in(2, \infty)\setminus\{q_\circ(d)\} $.   Indeed, 
by the estimate in Lemma \ref{short} and real interpolation, we have 
$\| \sup_{t\in I_\circ} |U_t^{s} P_0  f| \|_{L^{q,2}(\mathbb R^d)} \le C\|f\|_{L^2}$ for $2\le q<\infty$ and $s\in \mathbb R$. 
Fixing $s_\ast=s_c(q,d)$ and using the estimates \eqref{res-weak} and \eqref{sss}, by  
interpolation between those estimates,   we also have 
\[\|  \sup_{t\in I_\circ} | \sum_{j\ge 1} U_t^{s_c(q,d)}  P_j f | \|_{L^{q,2}( \mathbb R^d)}  \le C
\|f\|_{L^{2}}\] for  $q\in(2, \infty)\setminus\{q_\circ(d)\} $.  Therefore, \eqref{L2-h} follows. 
\end{rmk}

\subsection{The case $q=q_\circ(d)$: Proof of \eqref{max2}}
\label{caseqc} Finally, we consider the case $q=q_\circ(d)$ and prove \eqref{max2}.  For the purpose, we make use of the estimate \eqref{max-frequency} 
with $(1/p,1/q)\in L_1$ for  $ \frac{2(d^2+2d-1)}{(d-1)(d+1)}< q\le \infty$ 
(see Figure \ref{fig2}).   In fact, we have  the estimate
\Be
\label{max-frequency2}
\| \sup_{t\in I_\circ}  |e^{it\sqrt{-\Delta}} f|\|_{L^{q}( \mathbb R^d)}  \le CN^{\frac{d+1}2- \frac{d^2+1}{(d-1)q}   } \|f\|_{L^p}
\Ee
for $p,q$ satisfying $(d-1)(1-\frac{1}{p}) =\frac{d+1}{q}$ and  $\frac{2(d^2+2d-1)}{(d-1)(d+1)}< q\le \infty$ whenever $\supp \widehat f \subset \mathbb A_N$.

Let $s_\ast=\tfrac d2-\tfrac d{q_\circ(d)}$.  Note that 
$ \tfrac{d+1}2- \tfrac{d^2+1}{(d-1)q}= s_\ast 
+ \frac{d^2+1}{d+1}\big(\tfrac1p-\tfrac12\big).$  Thus, by \eqref{max-frequency2}
 \[ \|  \sup_{t\in I_\circ} |U_t^{s_\ast}  P_j f | \Big \|_{L^{q}( \mathbb R^d)}  \le C 2^{j\frac{d^2+1}{d+1}(\tfrac1p-\tfrac12)} \|f\|_{L^p}\]
for $p,q$ satisfying $(d-1)(1-\frac{1}{p}) =\frac{d+1}{q}$ and  $\frac{2(d^2+2d-1)}{(d-1)(d+1)}< q\le \infty$. Therefore, as before, we may apply  Lemma \ref{B-Sum-Tri} with $n=1$ choosing pairs $(1/p_\ell, 1/q_\ell)\in L_1$  such that  $p_0<2<p_1$ and $ \frac{2(d^2+2d-1)}{(d-1)(d+1)}< q_0, q_1 \le \infty$. Consequently,  we obtain  
\[ \Big \|  \sum_{j\ge 1} \sup_{t\in I_\circ} |U_t^{s_\ast}  P_j f | \Big\|_{L^{q_\circ(d), \infty}( \mathbb R^d)}  \le C\|f\|_{L^{2,1}}.\]  
Combining this and \eqref{low} gives $ \| \sup_{t\in (1,2)} |U_t^{s_\ast}  f | \|_{L^{q_\circ(d), \infty}( \mathbb R^d)}  \le C
\|f\|_{L^{2,1}}$, which is equivalent to \eqref{max2}. 

Since $ \sup_{t\in (0,1)} |U_t^{s_\ast}  f |=\sup_{t\in (1,2)} |U_t^{s_\ast}  e^{-i\sqrt{-\Delta}} f|$,  we have
\[ \Big \| \sup_{t\in (0,1)} |U_t^{s_\ast}  f | \Big\|_{L^{q_\circ(d), \infty}( \mathbb R^d)}  \le C
\Big\|e^{-i\sqrt{-\Delta}} f\Big\|_{L^{2,1}}.\]
However, it seems unlikely that  $\|e^{-i\sqrt{-\Delta}} f\|_{L^{2,1}} $ can be bounded by $\| f\|_{L^{2,1}}$.

\subsection{Generalization to the operator $e^{it\phi(D)}$} 
\label{remark:b} 
In this subsection, we are concerned with the generalization $\bf{(G2)}$ in the introduction.
In the proof of Theorem \ref{thm:max},  the estimate \eqref{smoothing} is the key ingredient. 
To conclude that the same endpoint estimates  hold for $e^{it\phi(D)}$, it is sufficient to show 
 the estimate \eqref{smoothing} for $e^{it\phi(D)}$ replacing $e^{it\sqrt{-\Delta}}=e^{it|D|}$, since the rest of the argument in the above proof works without modification.

Let $\tilde {\mathbb A}_1=\{ \xi: 2^{-2}\le |\xi|\le 2^2\}$. For $N\ge 1$, we set   
 \[ \phi_N(\xi)= N^{-1}\phi(N \xi).\] 
We show that the same estimates remain valid for $e^{it\phi(D)}$
provided that 
\Be\label{g2g20}  |\partial^\alpha (\phi_N(\xi)-|\xi|)| \le CN^{-1}\Ee 
 for some constant $C>0$  if $\xi \in \tilde {\mathbb A}_1$ and $|\alpha|\le d+1$.   
 It is easy to check that   \eqref{g2g2}  implies  \eqref{g2g20}. 
 
 Let us set $\tilde\beta(p,q)=\beta(p,q)-\frac{d+1}q+\frac dp$. Let $U_t=e^{it\phi(D)}$ or $e^{it|D|}= e^{it\sqrt{-\Delta}}$.  Consider the estimate 
$
\| U_t f\|_{L^{q}( \mathbb R^d\times I_\circ)}  \le CN^{\beta(p,q)}  \|f\|_{L^p}
$
when $\supp \widehat f\subset \mathbb A_N$.   By scaling, it follows that  those  estimates  are respectively  equivalent to 
\Be
\label{smoothing3}
\| U_t f\|_{L^{q}( \mathbb R^d\times N I_\circ)}  \le CN^{\tilde\beta(p,q)} \|f\|_{L^p}
\Ee 
with   $U_t  =e^{it\phi_N(D)}$ and $U_t  =e^{it|D|}$  when $\supp \widehat f\subset \mathbb A_1$. For our purpose, we only need to show that the estimate \eqref{smoothing3} for $U_t=e^{it\phi_N(D)}$ 
follows from the corresponding estimate for $e^{it|D|}$ when $\supp \widehat f\subset \mathbb A_1$. The converse is also true, as becomes clear below. 

Let $\tilde\beta \in C_c^\infty(2^{-2}, 2^2)$ such that $\tilde\beta=1$ on $[2^{-1}, 2]$.  Let us set 
\[ a(\xi,t)=  (2\pi)^{-d}  e^{it( \phi_N(\xi)-|\xi|)}\tilde\beta(|\xi|).\]
Since $\supp \widehat f\subset \mathbb A_1$, we note that 
\[  e^{it\phi_N(D)} f(x)=   \int  e^{i(x\cdot\xi +t|\xi|)} a(\xi,t) \widehat f(\xi)\, d\xi.  \]
From \eqref{g2g20} it follows that $|\partial_\xi^\alpha a(\cdot,t)|\le C$ for $|\alpha|\le d+1$ and $|t|\le 2N$. Thus, expanding $a(\xi,t)$ in the Fourier series
on $[-2\pi, 2\pi]^d$ gives $a(\xi,t)=\sum_{\mathbf k\in \mathbb Z^d} c_{\mathbf k, t} e^{i\xi\cdot 2^{-1}\mathbf k} $ with 
$|c_{\mathbf k, t}|\le C (1+|\mathbf k|)^{-d-1}$ for  all $|t|\le 2N$. Consequently, it follows that 
\[ e^{it\phi_N(D)} f(x)=\sum_{\mathbf k\in \mathbb Z^d}   c_{\mathbf k, t} e^{it|D|} f(x+2^{-1} \mathbf k)\]
for $|t|\le 2N$. Therefore, the estimate \eqref{smoothing3} for $U_t=e^{it\phi_N(D)}$ 
follows from that for $e^{it|D|}$.  The converse can also be seen in the same manner. 

\section{Proof of Propositions \ref{prop:xlocal to global}, \ref{prop:local to global}, and \ref{prop:local-global to global}} 

In this section we prove the three propositions \ref{prop:xlocal to global}, \ref{prop:local to global}, and \ref{prop:local-global to global}, which concern the equivalence  between the maximal estimates  \eqref{max}, \eqref{local-local}, \eqref{global-local}, and \eqref{global-global0}.   

We begin by noting  that 
\Be 
\label{uts}
U_t^{s}  f(x)= \int_{\mathbb R^d} K(x-y,t) f(y) dy,
\Ee 
where 
\begin{equation}
\label{KK}
K(x,t) = (2\pi)^{-d} \int_{\mathbb R^d} e^{i(x\cdot \xi + t|\xi|)} \frac{d\xi}{ (1+|\xi|^2)^{s/2}}.
\end{equation}

Recall $\beta$ from Section 3 and let $\beta_0=1-\sum_{j=1}^{\infty}\beta(2^{-j}\cdot)$. We set
\begin{align}
\label{kj}
K_j(x,t)&= (2\pi)^{-d}  \int e^{i(x\cdot \xi + t|\xi|)} \frac{ \beta(2^{-j}|\xi|) d\xi}{ (1+|\xi|^2)^{s/2}}, \quad j\ge 1, 
\\
\label{k0}
K_0(x,t)&=(2\pi)^{-d}  \int  e^{i(x\cdot \xi + t|\xi|)}   \frac{\beta_0(|\xi|) d\xi}{ (1+|\xi|^2)^{s/2}}.
\end{align}
Thus, we  have
\begin{equation*}\label{ksum}
K = \sum_{j=0}^{\infty}K_j. 
\end{equation*}

Henceforth, for simplicity, we set 
\[ I=(0,1), \quad \B=B^d(0,1).\]

\subsection{Proof of Proposition \ref{prop:xlocal to global}}
Since the implication from \eqref{max} to \eqref{local-local} is trivial, we only need to show that  \eqref{local-local} implies \eqref{max}. 
Recalling \eqref{def-u}, we note that \eqref{max} is equivalent to
\[\big\|  \sup_{t\in I} \big|U_t^s f\big|\big\|_{L^{q}(\mathbb R^d)} \le C\|f\|_{2}.\]

Let us set 
\[\textstyle \tilde U_t^s f= \sum_{j\ge 1} U_t^s P_j f.\]
Thanks to Lemma \ref{short}, it is sufficient to show that  
\begin{equation*}\label{tilde-global} \big\|  \sup_{t\in I} \big| \tilde U_t^s f\big|\big\|_{L^{q}(\mathbb R^d)} \le C\|f\|_{2}
\end{equation*}
while assuming that \eqref{local-local} holds, that is to say, 
\Be \label{tilde-local} \big\|  \sup_{t\in I} \big| \tilde U_t^s f\big|\big\|_{L^{q}(\mathbb B)} \le C\|f\|_{2}.\Ee

Let $\mathcal Q=\{Q\}$ be a collection of almost disjoint unit cubes that covers $\mathbb R^d$.  Let $\bar Q$ denote the cube of side length $5$ that has the same center as $Q$ so that $\dist(Q, \bar Q^c)\ge 2$. 
We have
\[   \big\|  \sup_{t\in I} \big| \tilde U_t^s f\big|\big\|_{L^{q}(\mathbb R^d)}  = \Big( \sum_{Q\in \mathcal Q} \big\|  \sup_{t\in I}  \big| \tilde U_t^s f \big|\big\|_{L^{q}(Q)}^q\Big)^{1/q}
\le  2(\mathrm I + \mathrm I\! \mathrm I),\]
where 
\[  \mathrm I^q= \sum_{Q\in \mathcal Q} \big\|  \sup_{t\in I}  \big| \tilde U_t^s f_{\bar Q} \big|\big\|_{L^{q}(Q)}^q,   \quad \mathrm I\! \mathrm I^q = \sum_{Q\in \mathcal Q} \big\|  \sup_{t\in I}  \big| \tilde U_t^s f_{\bar Q^c} \big|\big\|_{L^{q}(Q)}^q. \] 
We will show that 
\Be\label{conclusion}     \mathrm I, \,  \mathrm I\! \mathrm I   \lesssim   \|f\|_2.   \Ee 

By translation,  from \eqref{tilde-local} it follows that  $\big\|  \sup_{t\in I}  \big| \tilde U_t^s f_{\bar Q} \big|\big\|_{L^{q}(Q)}\lesssim  \|f_{\bar Q}\|_2$ for all $Q\in \mathcal Q$. Thus, we have
\[ \mathrm I \lesssim \Big( \sum_{Q\in \mathcal Q}  \|f_{\bar Q}\|_2^q\Big)^{1/q}\le \Big \| \Big( \sum_{Q\in \mathcal Q} |f_{\bar Q}|^q\Big)^{1/q} \Big \|_2 \lesssim \|f\|_2. \]

To handle $\mathrm I\! \mathrm I$, we use an estimate for the kernel of $\tilde U_t^s$. 
Let us set  $\tilde K(x,t) =  \sum_{j=1}^{\infty}K_j(x,t)$. 
 From \eqref{uts} and \eqref{kj} it is clear that 
\Be
 \label{tilde-u}  \tilde U_t^s  f(x)=   \int_{\mathbb R^d} \tilde K(x-y,t) f(y) dy.
\Ee
We claim that 
\Be 
\label{tilde-est}  |\tilde K(x, t)|\le \mathcal E_N(x):= C(1+ |x|)^{-N}
\Ee 
for any $N$ if $t\in I$ and $|x|\ge 2$.  Indeed, by scaling we have 
\[ K_j(x,t)= (2\pi)^{-d}  2^{dj} \int e^{i2^j(x\cdot \xi + t|\xi|)} \frac{ \beta(|\xi|) d\xi}{ (1+2^{2j}|\xi|^2)^{s}}.\]
Since $t\in I$ and $|x|\ge 2$, $|\nabla_{\xi} (x\cdot \xi + t|\xi|)|\ge |x|/2$. Thus, routine integration by parts gives
$|K_j(x,t)|\lesssim 2^{(d-2s)j} (2^j|x|)^{-N}$ for any $N$ if $t\in I$ and $|x|\ge 2$. Therefore, summation along $j\ge 1$ gives the estimate \eqref{tilde-est}. 

Since $\dist(Q, \bar Q^c)\ge 2$, using \eqref{tilde-u} and \eqref{tilde-est},  we have 
\[   \sup_{t\in I}  |\tilde U_t^s f_{\bar Q^c} (x)|\lesssim  \mathcal E_N\ast |f|(x)   \]
for $x\in Q$.  Therefore, 
\[  \mathrm I\! \mathrm I\lesssim   \|\mathcal E_N\ast |f|\|_q.\]
By choosing $N\ge d+1$, we have $\mathcal E_N\in L^1\cap L^\infty$. Thus,  $\|\mathcal E_N\ast |f|\|_q\lesssim \|f\|_p$ for any $p\le q$. In particular, we have 
$\mathrm I\! \mathrm I\lesssim \|f\|_2$. Therefore, we get \eqref{conclusion}. This completes the proof.

\begin{rmk}
\label{rmk:g3g3}   In order to extend Proposition \ref{prop:xlocal to global} to the  general operator $e^{it\phi(D)}$, we consider the  kernel 
of the operator $ (1+|D|^2)^{-s/2}e^{it\phi(D)}$, which is given by  
\begin{equation*}
\label{k-phi0} K^\phi(x,t) = (2\pi)^{-d} \int_{\mathbb R^d} e^{i(x\cdot \xi + t\phi(\xi))} \frac{d\xi}{ (1+|\xi|^2)^{s/2}}.
\end{equation*}
From the perspective of the  proof in the above, it is enough to show that  
\Be
\label{k-phi} |K^\phi (x, t)|\lesssim (1+ |x|)^{-d-1}
\Ee for $|x|\ge C$ with 
a sufficiently large 
$C>0$.  By  Littlewood-Paley decomposition and scaling  it follows that $K^\phi=\sum_{j=0}^\infty K^\phi_j$, where
\begin{equation*}
\label{phi-kj} K^\phi_j(x,t)= (2\pi)^{-d}  2^{dj} \int e^{i2^j(x\cdot \xi + t\phi_{2^j}(\xi))} \frac{ \beta(|\xi|) d\xi}{ (1+2^{2j}|\xi|^2)^{s}}.
\end{equation*}

Now, we note  from \eqref{g3} that there are constants $C, N_0>0$ such that 
\begin{equation*}\label{phi-up}
 |\partial^\alpha \phi_N(\xi)| \le C
 \end{equation*}
for  $\xi\in \tilde{\mathbb A}_1$  if $N\ge N_0$ and $|\alpha|\le d+1$.  
Thus, we have  $|\nabla_{\xi} (x\cdot \xi + t\phi_{2^j}(\xi))|\ge |x|/2$ for  $\xi \in \tilde{\mathbb A}_1$ if $|x|\ge \tilde C$ for a large enough $\tilde C>0$. Routine integration by parts gives
$|K_j^\phi(x,t)|\lesssim 2^{(d-2s)j} (2^j|x|)^{-d-1}$  if $t\in I$,  $|x|\ge \tilde C$ and $2^j\ge 2N_0$. The same bounds trivially hold for $0\le j< \log  2N_0$ since the 
kernels are Schwartz functions. Therefore, 
summation over $j$ gives the desired estimate \eqref{k-phi}.

\end{rmk}

\subsection{Proof of Proposition \ref{prop:local to global}}
We only need to show the implication from \eqref{local-local} to \eqref{global-local} since the converse is trivial.

 For our purpose,  recalling \eqref{def-u},   
it is sufficient  to show 
\begin{equation}\label{1}
\big\| \sup_{t \in \mathbb R} \big|U_t^s f\big|\big\|_{L^{q}(\B)} \le C \|f\|_{L^2},
\end{equation}
which is equivalent to  \eqref{global-local}, 
while assuming that 
\begin{equation}\label{2}
\big\| \sup_{t \in I} \big|U_t^s f\big|\big\|_{L^{q}(\B)} \le C \|f\|_{L^2}.
\end{equation}
The last estimate \eqref{2} is clearly equivalent to \eqref{local-local}.  We begin by observing that  there is a constant $C$, independent of $J$, such that 
\begin{equation}
\label{3}
	\big\| \sup_{t \in J} \big|U_t^s f\big|\big\|_{L^{q}(\B)} \le C \|f\|_{L^2}
\end{equation}
holds for any  unit interval $J$ with a constant $C$.  This is clear from \eqref{2} since   $e^{it\sqrt{-\Delta}}$ is a unitary operator.  

We now consider the dual forms of the estimates \eqref{1} and \eqref{3}. 
Let  us set $T=U_t^s$ and let $T^*$ denote the adjoint operator of $T$. It is easy to see that
\[
T^*F(x) = \int (1-\Delta)^{-s/2}e^{-it'\sqrt{-\Delta}}  F(\cdot, t') \, dt'.
\]

For simpler notation, we set 
\begin{align*}
g_{\B}(x,t) &= \chi_\B(x)g(x,t),
\\
g_{\B,J}(x,t) &= g_{\B}(x,t)\chi_{J}(t).
\end{align*}
 By duality, it suffices to show that 
 \begin{equation}\label{global maximal}
\big\|\,T^* g_{\B} \big\|_{L^2} \le C \| g\|_{L_x^{q'}L_t^1}
\end{equation}
holds provided that 
 \begin{equation}\label{local maximal}
\big\|\, T^* g_{\B,J} \big\|_{L^2} \le C \|g\|_{L_x^{q'}L_t^1}
\end{equation}
holds  for arbitrary unit interval $J$. It is clear  that  \eqref{global maximal} and \eqref{local maximal} are equivalent 
to \eqref{1} and \eqref{3}, respectively.

We proceed to show \eqref{global maximal}. Let $\mathfrak J=\{J\}$ be a collection of   almost disjoint unit intervals $J$  such that 
\[\mathbb R = \bigcup_{J\in \mathfrak J} J.\]  
Consequently, $g_{\B}=\sum_{J} g_{\B,J}$. Thus, \eqref{global maximal} follows if we show 
\begin{equation*}
\Big|\sum_{J, J'} \big\langle T^*g_{\B,J}, T^*g_{\B,J'}\big\rangle \Big| \le C \|g_{\B} \|_{L_x^{q'}L_t^1}^2.
\end{equation*}

To this end, we divide 
\begin{equation*}
\sum_{J, J'} \big\langle T^*g_{\B,J}, T^*g_{\B,J'}\big\rangle=\mathcal{I}_1 + \mathcal{I}_2,
\end{equation*}
where
\begin{align*}
\mathcal{I}_1&=\sum_{J, J':\dist(J, J')< 4}\big\langle T^*g_{\B,J}, T^*g_{\B,J'}\big\rangle,\\
\mathcal{I}_2&=\sum_{J,J':\dist(J, J')\ge 4} \big\langle T^*g_{\B,J}, T^*g_{\B,J'}\big\rangle. 
\end{align*}

We first consider $\mathcal{I}_1$. By the Cauchy-Schwarz inequality and the estimate \eqref{local maximal}
it follows that $|\big\langle T^*g_{\B,J}, T^*g_{\B,J'}\big\rangle| \le  \|  T^*g_{\B,J}\|_2\| T^*g_{\B,J'}\|_2\lesssim \| g_{\B,J}\|_{L_x^{q'}L_t^1} \| g_{\B,J'}\|_{L_x^{q'}L_t^1}$. Hence, 
\[ |\mathcal{I}_1|\lesssim \sum_{J,J': \dist(J, J')< 4}  \| g_{\B,J}\|_{L_x^{q'}L_t^1} \| g_{\B,J'}\|_{L_x^{q'}L_t^1}.\]
The Cauchy--Schwarz inequality gives
\begin{align*}
|\mathcal{I}_1| 
& \lesssim \Big(\sum_{J} \|g_{\B,J}\|_{L_x^{q'}L_t^1}^2  \Big)^{1/2} 
\Big(\sum_{J'} 
\|g_{\B,J'}\|^2_{L_x^{q'}L_t^1}\Big)^{1/2}.
\end{align*}
Since $1 \le q'\le 2$ and the intervals $J$ are almost disjoint, Minkowski's inequality gives
\Be
\label{easy} 
\Big(\sum_J \|g_{\B,J}\|_{L_x^{q'}L_t^1}^2\Big)^\frac12 \le \big\|\big(\sum_J |g_{\B,J}|^2\big)^{1/2}\big\|_{L_x^{q'}L_t^1} \lesssim \big\|\sum_J g_{\B,J}\big\|_{L_x^{q'}L_t^1}.   \Ee 
Therefore, we obtain 
\begin{equation*}
     |\mathcal{I}_1|  \lesssim \|g_{\B}\|_{L_x^{q'}L_t^1}^2.
\end{equation*}

We now show that $\mathcal{I}_2$ also has the same upper bound. To this end, we write 
\[ \big\langle T^*g_{\B,J}, T^*g_{\B,J'}\big\rangle = \big\langle g_{\B,J}, TT^* g_{\B, J'}\big\rangle\]
and make use of an estimate for the kernel estimates of $TT^*$. 
We note that 
\begin{equation}
\label{ker-}
TT^*h(x,t)=\iint  K(x-y,t-t') h (y,t') dy dt', 
\end{equation}
where $K$ is given by \eqref{KK}. 
To obtain the desired estimate for $\mathcal{I}_2$,  the following estimate for $K$  is crucial. 
\begin{lemma}\label{kernel}
If $|t|\ge 4$ and $|x|\le 2$, then we have the estimate
 \[ |K(x,t)|\lesssim |t|^{-d}.\]
\end{lemma}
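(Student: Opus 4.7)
The plan is to decompose the kernel dyadically in frequency and apply the principle of non-stationary phase to each piece. The crucial geometric observation under the hypotheses $|x|\le 2$, $|t|\ge 4$ is that the phase $\Phi(\xi)=x\cdot\xi+t|\xi|$ has gradient
\[|\nabla_\xi\Phi|=\Big|x+t\tfrac{\xi}{|\xi|}\Big|\ge |t|-|x|\ge \tfrac{|t|}{2}\]
at every $\xi\neq 0$, so there are no critical points and integration by parts is available. I would extend the Littlewood--Paley decomposition fixed in the paper to all $j\in\mathbb Z$ by writing $K_0=\sum_{j\le 0}K_j$ (with each $K_j$ using the cutoff $\beta(2^{-j}|\xi|)(1+|\xi|^2)^{-s/2}$), so that it suffices to bound each $|K_j|$ and sum.

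For each $j$, I would apply integration by parts $N=d+1$ times using the operator $L=\tfrac{\nabla\Phi}{i|\nabla\Phi|^2}\cdot\nabla_\xi$. A short calculation, relying on the cancellation between $|\partial^k\nabla\Phi|\lesssim |t|\,|\xi|^{-k}$ and $|\nabla\Phi|\gtrsim|t|$, shows that derivatives of $V:=\nabla\Phi/|\nabla\Phi|^2$ obey $|\partial^k V|\lesssim |t|^{-1}|\xi|^{-k}$. Combined with the amplitude bound $|\partial^\alpha[\beta(2^{-j}|\xi|)(1+|\xi|^2)^{-s/2}]|\lesssim 2^{-|\alpha|j}\langle 2^j\rangle^{-s}$ on $|\xi|\sim 2^j$ and the support volume $2^{dj}$, integration by parts yields
\[|K_j(x,t)|\lesssim 2^{dj}(2^j|t|)^{-(d+1)}\langle 2^j\rangle^{-s}.\]

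For $j\ge 1$, this reads $|K_j|\lesssim 2^{-(s+1)j}|t|^{-(d+1)}$, which sums to $O(|t|^{-(d+1)})\subset O(|t|^{-d})$. For $j\le 0$, the bound becomes $|K_j|\lesssim 2^{-j}|t|^{-(d+1)}$, which is only useful when $2^j\gtrsim 1/|t|$; for $j$ with $2^j\lesssim 1/|t|$, I would instead use the trivial estimate $|K_j|\lesssim 2^{dj}$ coming from the support volume alone. Splitting the low-frequency sum at $j\sim-\log_2|t|$ gives
\[\sum_{2^j\le 1/|t|}2^{dj}\;\lesssim\;|t|^{-d},\qquad \sum_{1/|t|\le 2^j\le 2}2^{-j}|t|^{-(d+1)}\;\lesssim\;|t|\cdot|t|^{-(d+1)}=|t|^{-d},\]
and combining with the high-frequency contribution yields the claim.

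The main obstacle I anticipate is not the final summation but the careful verification that in repeatedly applying $L^T$ the derivatives of $V$ really do contribute only bounded (rather than $|t|$-growing) factors, after the cancellation between the $|t|$ appearing in $\partial\nabla\Phi$ and the $|t|^{-2}$ in $|\nabla\Phi|^{-2}$. Once that calibration is in hand, the balance between the ``trivial'' and ``non-stationary'' regimes at $2^j\sim 1/|t|$ makes both halves of the low-frequency sum contribute exactly $|t|^{-d}$, which is what saturates the claim (and explains why one cannot hope for $|t|^{-(d+1)}$ here).
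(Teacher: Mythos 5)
Your proposal is correct, and it takes a genuinely different route from the paper's proof. The paper passes to spherical coordinates and integrates by parts in the radial variable $r$: since $|x\cdot\theta + t|\ge |t|/2$, the pieces $K_j$ with $j\ge 1$ decay like $(2^j|t|)^{-k}$ for any $k$, and the low-frequency piece $K_0$ is handled not by further decomposition but by a single boundary-term analysis of the one-dimensional integral $\int_0^2 e^{ir(x\cdot\theta+t)}a_0(r)\,dr$, where $a_0(r)=\beta_0(r)r^{d-1}(1+r^2)^{-s}$ vanishes to order $d-1$ at $r=0$; the first non-vanishing boundary term $a_0^{(d-1)}(0)\neq 0$ is precisely what produces, and saturates, the $|t|^{-d}$ bound. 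You instead keep the $d$-dimensional integral, use the full gradient operator $L=\frac{\nabla\Phi}{i|\nabla\Phi|^2}\cdot\nabla_\xi$, and extend the Littlewood--Paley ladder through $j\le 0$, splitting the low-frequency sum at the scale $2^j\sim 1/|t|$ where the non-stationary-phase gain and the trivial volume bound balance. Both approaches locate the $|t|^{-d}$ saturation in the conical singularity of $\xi\mapsto|\xi|$ at the origin; the paper packages it as a boundary term, you as a dyadic crossover scale. Your derivative cancellation $|\partial^\alpha V|\lesssim |t|^{-1}|\xi|^{-|\alpha|}$ is correct and is exactly the calibration one needs; the only thing worth double-checking before writing it up is that the sum over $j\ge 1$ converges, which it does because $s=s_c(q,d)\ge 1/2>0$ in context, giving $\sum_{j\ge 1}2^{-(1+s)j}<\infty$.
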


\begin{proof}[Proof of Lemma \ref{kernel}]
Recalling \eqref{kj} and \eqref{k0}, we write the integral in the spherical coordinates and  make a change of variables to get 
\[  K_j(x,t)= (2\pi)^{-d}  \int_{\mathbb S^{d-1}} K^\theta_j(x,t) d\theta, \]  
where 
\begin{equation*}
K_j^\theta(x,t)=2^{dj} \int e^{i2^jr(x\cdot \theta + t)}  \frac{\beta(r)  r^{d-1} dr}{(1+2^{2j}r^2)^{s}}. 
\end{equation*} 

Let $a_j(r)={\beta(r)  r^{d-1}}{(1+2^{2j}r^2)^{-s}}$. We now note that  
\begin{equation*}
K_j^\theta(x,t)= 2^{dj} \int \Big[\Big( \frac{1}{2^j(x\cdot\theta+t)} \frac{d}{dr}  \Big)^k e^{i2^jr(x\cdot \theta + t)}\Big] a_j(r) dr .
\end{equation*}
Also, note that  $|x\cdot\theta+t|\ge |t|/2$ since $|t|\ge 4$ and $|x|\le 2$. Using the fact that   $a_j^{(k)}=O(2^{-2sj})$ for any $k\ge 0$, by routine integration by parts we obtain  $|K_j^\theta(x,t)| \le C2^{(d-2s)j}(2^{j}|t|)^{-k}$ for any $k \ge 0$. 
 Taking arbitrarily large $k$, we obtain 
$
\sum_{j>0}|K_j^\theta(x,t)| \le C|t|^{-N}
$
for any $N$. Consequently, integration over the sphere $\mathbb S^{d-1}$ gives
\begin{equation}
\label{sum}
\sum_{j>0}|K_j(x,t)| \le C|t|^{-N}
\end{equation}
for any $N$.

We now consider the case $j=0$, which requires additional care since $\xi\mapsto |\xi|$ is not smooth at the origin.  
Note that $\beta_0(r)=0$ if $r\ge 2$. 
As before, we have $K_0(x,t)= (2\pi)^{-d}  \int_{\mathbb S^{d-1}} K^\theta_0(x,t) d\theta,$
where 
\begin{equation*}
K_0^\theta(x,t)= \int_0^2 e^{ir(x\cdot \theta + t)} a_0(r) dr
\end{equation*} 
and $a_0(r)=\beta_0(r)  r^{d-1}{(1+r^2)^{-s}}$.  Note that  $a_0^{(k)}(0)=0$ for $k=0, \dots, d-2$ and  $a_0^{(k)}(2)=0$ for all $k$.   
By integration by parts $d$ times as above (it should be noted that $a_0^{(d-1)}(0)\neq 0$),  we have
\[|K_0^\theta| \lesssim |t|^{-d}.\]
Integrating the estimates over the unit sphere yields  the bound $|K_0(x,t)|\lesssim |t|^{-d}.$ 

Therefore, combining this and the estimate \eqref{sum} gives the desired estimate. 
\end{proof}

By \eqref{ker-} and  Lemma  \ref{kernel},  
we have
\[ |\chi_{\B\times J}TT^* (\chi_{\B\times J'} h)(x,t)|\lesssim  \chi_\B(x)  \iint \chi_{J}(t) |t-t'|^{-d}  \chi_{J'}(t') \chi_\B(y) |h (y,t')| dy dt'\]
   provided that $\dist(J, J')\ge 4$. Thus, when  $\dist(J, J')\ge 4$, it follows that
   \[\|\chi_{\B\times J}TT^* (\chi_{\B\times J'} h)(x,\cdot)\|_{L_t^\infty} \lesssim \dist(J, J')^{-d}   \iint  \chi_{\B\times \B}(x,y) \|h (y,\cdot)\|_{L_t^1} dy. \]   Therefore,  we have
\begin{equation}\label{$TT^*$}
\|\chi_{\B\times J}TT^* \chi_{\B\times J'} h \|_{L_x^r L_t^\infty} \le C(1+\dist(J, J'))^{-d}\| h\|_{L_x^{p}L_t^{1}}
\end{equation}
 for any $1\le p\le r\le \infty$ if  $\dist(J, J')\ge 4$.  
Since $\big\langle T^*g_{\B,J}, T^*g_{\B,J'}\big\rangle = \big\langle g_{\B,J}, TT^* g_{\B,J'}\big\rangle$, 
\begin{align*}
 |\mathcal{I}_2|&\le  \sum_{J,J': \dist(J, J')\ge 4} \big|\big\langle g_{\B,J}, TT^* g_{\B,J'}\big\rangle \big|
 \\
&\le  \sum_{J,J': \dist(J, J')\ge 4} \| g_{\B,J}\|_{L_x^{q'}L_t^1} \|TT^* g_{\B,J'}\|_{L_x^{q}L_t^\infty}.
\end{align*}
Using the estimate \eqref{$TT^*$} for $p=q'$ and $r=q$ gives
\begin{align*}
 |\mathcal{I}_2|\le C \sum_{J,J'} (1+\dist(J, J'))^{-d} \|g_{\B,J}\|_{L_x^{q'}L_t^1} \|g_{\B,J'}\|_{L_x^{q'}L_t^1}.
\end{align*}
Note   $
 \sum_{J} (1+\dist(J, J'))^{-d} \le C
$ for any $J'$ and some constant $C>0$. The same also holds by replacing the roles of $J$ and $J'$. Therefore,  Schur's test gives
\begin{equation*}
 |\mathcal{I}_2|\le C \sum_{J}\|g_{\B,J}\|_{L_x^{q'}L_t^1}^2.
\end{equation*}
Consequently, using \eqref{easy},   we conclude that  $|\mathcal{I}_2|\le C\|g_B\|_{L_x^{q'}L_t^1}^2.$
This completes the proof.

\subsection{Proof of Proposition \ref{prop:local-global to global}}
\label{proof-scaling}
 Since $s_c(q, d)\ge 0$, the implication from  \eqref{global-global0} to \eqref{local-local} is trivial. Thus, we only need to prove that  \eqref{local-local} implies \eqref{global-global0}. 
From the inequality \eqref{local-local} and time reversal symmetry,  it follows that
\begin{equation}\label{Sobolev maximal estimate}
\Big\| \sup_{t \in (-1,1)} \Big|  \int e^{i(x \cdot \xi \, + t|\xi|)} \chi_{\{|\xi|\ge 1\}}\widehat{f}(\xi\,)|\xi|^{-s_c(q,d)} d\xi \Big|\Big\|_{L^{q}(\B)} \le C \|f\|_{L^2}
\end{equation}
for all $f\in L^2$. 
Let us set 
\[g(x) = R^{d/2} f(R x).\] 
Since $\|g\|_2=\|f\|_2$,  applying the inequality \eqref{Sobolev maximal estimate} to $g$ gives 
\Be 
\label{eee}
 \Big\|  R^{-\frac d2} \sup_{t \in (-1,1)} \Big|  \int e^{i(x \cdot \xi \, + t|\xi|)} \chi_{\{|\xi|\ge 1\}}\widehat{f}( \xi/R\,)|\xi|^{-s_c(q,d)} d\xi \Big|\Big\|_{L^{q}(\B)} \le C \|f\|_{L^2}.
 \Ee
 Since $s_c(q,d)=d/2-d/q$, changing  variables $\xi \rightarrow R\xi$,  we have
 \begin{align*}
 R^{-\frac d2} \sup_{t \in (-1,1)}   \Big|  \int &e^{i(x \cdot \xi \, + t|\xi|)} \chi_{\{|\xi|\ge 1\}}\widehat{f}(\xi/R\,)|\xi|^{-s_c(q, d)} d\xi \Big| 
\\
&= R^{\frac dq} \sup_{t \in (-R,R)} \Big|   \int e^{i(Rx \cdot \xi \, + t|\xi|)} \chi_{\{|\xi|\ge R^{-1}\}}\widehat{f}(\xi\,)|\xi|^{-s_c(q,d)} d\xi \Big|,
\end{align*}
Therefore,  combining this and \eqref{eee}, by the change of variables $x \rightarrow x/R$, we obtain 
\begin{align*}
\Big\| \sup_{t \in (-R,R)} \Big|  \int &e^{i(x \cdot \xi \, + t|\xi|)} \chi_{\{|\xi|\ge R^{-1}\}}\widehat{f}(\xi\,)|\xi|^{-s_c(q, d)} d\xi \Big|  \Big\|_{L^{q}(R\B)}\le C\|f\|_2.
\end{align*}
 Letting $R \rightarrow \infty$ yields 
\[ \| \sup_{t \in \mathbb R} \Big|  \int e^{i(x \cdot \xi \, + t|\xi|)} \widehat{f}(\xi\,)|\xi|^{-s_c(q, d)} d\xi \Big|  \Big\|_{L^{q}(\mathbb R^d)}\le C\|f\|_2,\]
which is clearly equivalent to   the global maximal estimate \eqref{global-global0} with $ s= s_c(q,d)$.

\section*{Acknowledgements}

This work was supported by the National Research Foundation of Korea (NRF) grants  no. RS-2023-00239774 (Cho) and no. RS-2024-00342160  (Lee), as well as the National Key R\&D Program of China grant no. 2023YFA1010800 and the Natural Science Foundation of China grant no. 12271435 (Li). 

\bibliographystyle{plain}

\end{document}